\newtheorem{theorem}{Theorem}[section]
\newtheorem{lemma}[theorem]{Lemma}
\newtheorem{corollary}[theorem]{Corollary}
\theoremstyle{definition}
\newtheorem{definition}[theorem]{Definition}
\newtheorem{proposition}[theorem]{Proposition}
\newtheorem{example}[theorem]{Example}
\theoremstyle{remark}
\newtheorem{remark}[theorem]{Remark}
\numberwithin{equation}{section}
\begin{document}

\keywords{elliptic curve, Mordell-Weil rank, elliptic surface}
\author{Bartosz Naskręcki}
\title[Mordell-Weil ranks of families of elliptic curves]{Mordell-Weil ranks of families of elliptic curves parametrized by binary quadratic forms}

\address{Faculty of Mathematics and Computer Science, Adam Mickiewicz University\\
Umultowska 87, 61-614 Poznań, Poland\\
and School of Mathematics, University of Bristol, University Walk, Bristol BS8 1TW, UK\\
E-mail: nasqret@gmail.com}
\maketitle

\begin{abstract}
We prove results on the Mordell--Weil rank of elliptic curves $y^2=x(x-\alpha a^2)(x-\beta b^2)$ parametrized by binary quadratic forms $\alpha a^2+\beta b^2=\gamma c^2$. We express our explicit lower bounds over number fields and offer a detailed description of the corresponding Mordell-Weil group structure in the function field case.
\end{abstract}

\section{Introduction}
In the previous paper \cite{Naskrecki_Acta} we have studied a family of elliptic curves
\[y^2=x(x-a^2)(x-b^2),\]
where $a^2+b^2=c^2$ and $a\neq \pm b$ and $ab\neq 0$ over the rationals and over the function fields $\mathbb{Q}(t)$, $\overline{\mathbb{Q}}(t)$. We have computed the non-trivial lower bound for the number of generators of the Mordell-Weil group over $\mathbb{Q}$ as a consequence of more refined result obtained over function fields. In this paper we continue to study families of elliptic curves parametrized in general by binary quadratic forms. This means that we consider the curve of the form
\begin{equation}\label{eq:main_family}
y^2=x(x-\alpha a^2)(x-\beta b^2)
\end{equation}
where $\alpha a^2+\beta b^2=\gamma c^2$ and we consider the solution sets over a fixed number field, as well as, over the function fields of one variable. To each such curve we attach in a suitable sense an elliptic surface fibered over the projective line $\mathbb{P}^{1}$. The arithmetic properties of those surfaces allow us to obtain sharp bounds on the rank of the Mordell--Weil group over the function field $\overline{\mathbb{Q}}(t)$.  Our approach uses as a main tool the Shioda-Tate formula \cite{Shioda_Mordell_Weil} and the explicit intersection pairing defined on elliptic surfaces, which gives a well-defined notion of height of points on elliptic curves over the function field, cf. \S \ref{sub:heights}.

Our main motivation to study this families is to find explicit examples of elliptic curves over the rational function field $\mathbb{Q}(t)$ that have at the same time positive rank and certain fixed torsion subgroup structure. Family of curves \eqref{eq:main_family} appeared already in arithmetic applications in \cite{Ulas_Bremner}. It was also used in \cite{Naskrecki_EDS} to study elliptic divisibility sequences. In \cite{Naskrecki_ALANT} we provide another generalization of family \eqref{eq:main_family}, so we can understand the results of this paper in a larger context. Nonetheless, the results included here form an important step in classification of ranks in family \eqref{eq:main_family}.

\section{Notation}
We will use a common notation for certain objects described in this article. By $E$ we denote an elliptic curve, $K$ is a field of functions over $\mathbb{P}^{1}$, usually $\mathbb{Q}(t)$ or $\overline{\mathbb{Q}}(t)$. By $k$ we denote an algebraically closed field and $\mathcal{E}$ denotes a triple $(S,C,\pi)$ where $\pi:S\rightarrow C$ determines a fibration on $S$ which gives an elliptic surface structure, cf. Definition \ref{definition:ell_surface}. 

\section{Main theorems}
The main theorems are first formulated in the setting of elliptic curves over function fields. Then  by the application of Silverman's specialization theorem we can adopt the results to the arithmetic context of a fixed number field. We also prove as a corollary the result similar to \cite[Thm.1.1]{Naskrecki_Acta} but with improved rank bound by $1$ and the binary quadratic form which is different. We say two polynomials in $\overline{\mathbb{Q}}[t]$ are coprime if they don't have a common root.
\begin{theorem}\label{theorem:Main_theorem_geometric_MW}
Let $f,g\in\overline{\mathbb{Q}}[t]$ be two coprime polynomials such that there exists another polynomial $h\in\overline{\mathbb{Q}}[t]$ that satisfies the relation $f^2+g^2=h^2$. Let us assume that $\deg f=2$ and $\deg g\leq 2$. Let $E$ be an elliptic curve determined by the Weierstrass equation
\begin{equation}\label{equation:Weierstrass_equation_f_g_h}
y^2=x(x-f^2)(x-g^2).
\end{equation}
Then $E(\overline{\mathbb{Q}}(t))\cong \mathbb{Z}^{2}\oplus \mathbb{Z}/2\mathbb{Z}\oplus\mathbb{Z}/4\mathbb{Z}$ and the following points generate the group $E(\overline{\mathbb{Q}}(t))$
\begin{align*}
P_{1}=&(-(1+\sqrt{2})g(g-h),\sqrt{-1}(1+\sqrt{2})g(g-h)(\sqrt{2}g-h)),\\
P_{2}=&((f-h)(g-h),(f+g)(f-h)(g-h)),\\
T_{1}=&(g^2,0),\\
T_{2}=&(fg, \sqrt{-1}f(f - g)g).
\end{align*}
\end{theorem}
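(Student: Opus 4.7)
The plan is to realize $E$ as the generic fiber of the Kodaira-Néron elliptic surface $\pi: S \to \mathbb{P}^{1}$ over $\overline{\mathbb{Q}}$, to determine all reducible fibers via Tate's algorithm, and then to combine the Shioda-Tate formula with an explicit height-pairing computation in order to pin down both the rank and a set of generators.

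First I would enumerate the reducible fibers. Writing the Weierstrass form as $y^2 = x^3 - (f^2+g^2)x^2 + f^2 g^2 x$, one has $\Delta = 16 f^4 g^4 (f-g)^2 (f+g)^2$ and $c_4 = 16(f^4 - f^2 g^2 + g^4)$, so the only possible bad finite places are the zeros of $f$, $g$, $f-g$, and $f+g$. Using the polynomial Pythagorean parametrization $f = 2pq$, $g = p^2 - q^2$, $h = p^2 + q^2$ with $p, q$ coprime of degree $\le 1$ (forced by coprimality of $f, g$ together with the degree hypotheses), one checks in the generic case $\deg f = \deg g = 2$ that each simple root of $f$ or $g$ gives $v(\Delta) = 4$, $v(c_4) = 0$, i.e.\ Kodaira type $I_4$, while each simple root of $f \pm g$ gives $v(\Delta) = 2$, $v(c_4) = 0$, i.e.\ type $I_2$. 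The degrees of $a_2, a_4$ match the relatively minimal Weierstrass model of fibration index $d = 2$, identifying $S$ as a K3 surface, and the finite Euler numbers $4\cdot 4 + 4 \cdot 2 = 24$ already saturate $e(S)$, so the fiber at $\infty$ is smooth. The sub-case $\deg g = 1$ is treated identically, the missing $I_4$ at a finite place being compensated by an $I_4$ at infinity, as one sees from the standard substitution $s = 1/t$.

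Second, I would apply Shioda-Tate, $\rho(S) = 2 + \mathrm{rank}\, E(\overline{\mathbb{Q}}(t)) + \sum_v (m_v - 1)$, combined with $\rho(S) \le 20$. The reducible-fiber contribution is $4 \cdot 3 + 4 \cdot 1 = 16$ in every case, yielding $\mathrm{rank}\, E(\overline{\mathbb{Q}}(t)) \le 2$. For the matching lower bound and the identification of generators, I verify by direct substitution (using only $f^2+g^2 = h^2$) that $P_1, P_2 \in E(\overline{\mathbb{Q}}(t))$, and compute their Néron-Tate heights via the intersection formula of \S \ref{sub:heights}; a positive Gram determinant then forces independence, hence rank exactly $2$. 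For the torsion, $E[2]$ is visible in the Weierstrass form, a direct slope/doubling computation gives $2T_2 = (0,0)$, and a 2-descent shows $(f^2, 0), (g^2, 0) \notin 2E(\overline{\mathbb{Q}}(t))$ since $(f-g)(f+g)$ fails to be a square; the maximal component-group order $4$ at the $I_4$ fibers then obstructs any 8-torsion, so $E(\overline{\mathbb{Q}}(t))_{\mathrm{tors}} = \mathbb{Z}/2\mathbb{Z} \oplus \mathbb{Z}/4\mathbb{Z}$ generated by $T_1, T_2$.

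The principal obstacle will be upgrading the independence of $P_1, P_2$ to actual generation of the free part, since a priori $\langle P_1, P_2 \rangle$ is only a finite-index sublattice. To rule out nontrivial index one must compare the explicit height Gram determinant against the structural discriminant delivered by the orthogonal decomposition $NS(S) \otimes \mathbb{Q} = T_{S} \otimes \mathbb{Q} \oplus (\text{Mordell-Weil lattice}) \otimes \mathbb{Q}$, namely $|\mathrm{disc}(\text{MW})| = |\mathrm{disc}(NS(S))| \cdot |E_{\mathrm{tors}}|^2 / \prod_v n_v$; exact agreement is needed to conclude that $P_1, P_2$ generate the full lattice. A secondary obstacle is the treatment of degenerate sub-configurations (e.g.\ when leading terms of $f^2$ and $g^2$ cancel so that $\deg h < 2$, or when $\deg g = 1$), in each of which the singular-fiber tally must be redone while the shape of the Shioda-Tate bound, and the listed generators, remain unchanged.
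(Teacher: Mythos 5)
Your overall architecture---Tate's algorithm to get the $I_4$/$I_2$ fiber configuration, Shioda--Tate together with $\rho(S)\le 20$ for the upper bound $r\le 2$, explicit heights of $P_1,P_2$ for the lower bound, and component groups plus non-$2$-divisibility of $(f^2,0)$ and $(g^2,0)$ for the torsion---coincides with the paper's (Lemmas \ref{lemma:bad_fibres_types_f_g_curve} and \ref{lemma:rank_2_subgroup_E}, Corollaries \ref{corollary:torsion_structure_f_g_curves_with_assumptions} and \ref{corollary:rank_2_for_f_g_curves}), and your Pythagorean parametrization $f=2pq$, $g=p^2-q^2$, $h=p^2+q^2$ is Proposition \ref{proposition:h_1_h_2_generators} in different coordinates. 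A minor omission: before reading off Kodaira types and $\chi(S)$ from the given Weierstrass equation one must check that it is a globally minimal model at every place including $\infty$; the paper spends Lemma \ref{lemma:globally_minimal_model_f_g} on this, whereas you assert it from degree counts alone.

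The genuine gap is in the step you yourself flag as the principal obstacle: passing from linear independence of $P_1,P_2$ to generation of the free part. The discriminant relation $|\mathrm{disc}(\mathrm{MW})|=|\mathrm{disc}(NS(S))|\cdot|E_{\mathrm{tors}}|^2/\prod_v n_v$ is correct, but it is unusable as you propose to use it: since $S$ is a K3 surface with $\rho(S)=20$, the quantity $|\mathrm{disc}(NS(S))|$ equals the discriminant of the rank-two transcendental lattice, which is exactly as unknown as the Mordell--Weil lattice itself; your outline gives no independent way to compute it, so the ``exact agreement'' you need cannot be checked. The paper closes this gap by a two-step argument. First, because every singular fiber is of type $I_2$ or $I_4$, the height pairing takes values in $\frac14\mathbb{Z}$, so rescaling by $4$ gives an integral positive definite lattice $\Lambda$ in which the sublattice $\Lambda'=\langle P_1,P_2\rangle$ has discriminant $8=n^2\Delta(\Lambda)$; integrality of $\Delta(\Lambda)$ forces the index $n$ to divide $2$. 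Second, the case $n=2$ is eliminated by an explicit $2$-descent: the images of $P_1,P_2,T_1,T_2$ under $E(K)/2E(K)\hookrightarrow K^\times/(K^\times)^2\times K^\times/(K^\times)^2$ are shown (via the $h_1,h_2$ parametrization and discriminant computations on explicit quadratics and quartics) to generate a group of order $16$, whereas index $2$ would cap that order at $8$. Some concrete input of this kind, beyond the height Gram matrix, is indispensable, and your proposal does not supply it.
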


Let us fix a number field $F$. Take $\alpha,\beta,\gamma\in F$ nonzero elements such that $\beta^2+4\alpha\gamma\neq 0$. Define a quadratic polynomial $q_{\alpha,\beta,\gamma}(a,b,c)=\alpha a^2+\beta b^2-\gamma c^2\in F[a,b,c]$. We say that the quadric $q_{\alpha,\beta,\gamma}(a,b,c)=0$ is \textit{parametrizable} if there exists a tuple $(a_{0},b_{0},c_{0})\in F^3$ not equal to $(0,0,0)$ such that the equality $q_{\alpha,\beta,\gamma}(a_{0},\beta_{0},\gamma_{0})=0$ holds. It follows that the quadric $q_{\alpha,\beta,\gamma}(a,b,c)=0$ is parametrizable if and only if it has infinitely many solutions. This holds if and only if there exist three polynomials $f,g,h\in F[t]$ such that for any triple $(A,B,C)\in F^3$ that satisfies $q_{\alpha,\beta,\gamma}(A,B,C)=0$ we can find a number $t\in F$ such that
\[\frac{A}{C}=\frac{f(t)}{h(t)},\quad \frac{B}{C}=\frac{g(t)}{h(t)}.\]
The equation $q_{\alpha,\beta,\gamma}(a,b,c)=0$ defines a projective curve, a conic $C$ over $F$ in $\mathbb{P}^{2}_{F}$. The quadric $q_{\alpha,\beta,\gamma}(a,b,c)=0$ is parametrizable if and only if $C(F)\neq \emptyset$. There is a standard procedure which gives an isomorphism $C\cong \mathbb{P}^{1}_{F}$. Let $P=(a_{0},b_{0},c_{0})$ be a closed point in $C(F)$. We consider a pencil $\mathcal{L}$ of lines in $\mathbb{P}^{2}$ that pass through $P$. Each line $\ell\in\mathcal{L}$ defined over $F$ which is not tangent to $C$ intersects $C(F)$ in two distinct points $\{P,P_{\ell}\}=C(F)\cap \ell(F)$. Each point $P_{\ell}$ can be described in the homogeneous coordinates as $[f(t_{\ell}):g(t_{\ell}):h(t_{\ell})]$ where $f,g,h\in F[t]$ are polynomials of degree at most $2$ and $t_{\ell}\in F$. The polynomials $f,g,h$ depend only on the choice of the pencil $\mathcal{L}$. Without loss of generality we can assume that $2=\deg h=\deg f\geq \deg g$, permuting coordinates $a$ and $b$ if necessary.

\begin{theorem}\label{theorem:main_theorem_ranks_over_number_fields}
Let $F$ be a number field and let $\alpha,\beta,\gamma\in F^{\times}$. Assume that the quadric $q_{\alpha,\beta,\gamma}(a,b,c)=0$ is parametrizable. There exist an infinite set of triples $(a,b,c)\in F^3$ that satisfy $q_{\alpha,\beta,\gamma}(a,b,c)=0$ and the Weierstrass equation
\[y^2=x(x-\alpha a^2)(x-\beta b^2)\]
determines an elliptic curve $E_{\alpha,\beta,\gamma}$ defined over $F$. The rank of $E_{\alpha,\beta,\gamma}(F)$ equals at least
\begin{itemize}
\item[(i)] $1$,\quad when $-2\gamma\in (F^{\times})^{2}$,
\item[(ii)] $1$,\quad when $\alpha\beta\gamma\in (F^{\times})^{2}$,
\item[(iii)] $2$,\quad when $\alpha\beta\gamma\in (F^{\times})^{2}$ and $-2\gamma\in (F^{\times})^{2}$.
\end{itemize}
\end{theorem}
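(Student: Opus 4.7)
The plan is to first pass to the algebraic closure, apply Theorem \ref{theorem:Main_theorem_geometric_MW} to obtain the explicit structure of $E(\overline{F}(t))$, then descend by Galois averaging to $E(F(t))$, and finally invoke Silverman's specialization theorem to transfer the rank lower bound to $E_{\alpha,\beta,\gamma}(F)$ for the infinite family of triples $(a,b,c) = (f(t_0),g(t_0),h(t_0))$ arising from admissible $t_0 \in F$.

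Setting $\tilde f = \sqrt{\alpha}f$, $\tilde g = \sqrt{\beta}g$, $\tilde h = \sqrt{\gamma}h \in \overline{F}[t]$ one obtains a Pythagorean-type relation $\tilde f^2 + \tilde g^2 = \tilde h^2$ with $\deg \tilde f = \deg f = 2$ and $\deg \tilde g \le 2$, so Theorem \ref{theorem:Main_theorem_geometric_MW} applies to the base change $E/\overline{F}(t)$ and produces generators $P_1, P_2$ modulo torsion written in terms of $\tilde f,\tilde g,\tilde h$ and the constants $\sqrt{2},\sqrt{-1}$. Direct inspection shows that $P_2$ lies in $E(F(\sqrt{\alpha},\sqrt{\beta},\sqrt{\gamma})(t))$, while $P_1$ lies in $E(F(\sqrt{2},\sqrt{-1},\sqrt{\beta},\sqrt{\gamma})(t))$.

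The hypothesis $\alpha\beta\gamma \in (F^\times)^2$ gives $\sqrt{\gamma} \in F(\sqrt{\alpha},\sqrt{\beta})$, reducing the field of definition of $P_2$; dually, $-2\gamma \in (F^\times)^2$ gives $\sqrt{\gamma} \in F(\sqrt{2},\sqrt{-1})$ via the identity $\sqrt{-2\gamma} = \sqrt{-1}\sqrt{2}\sqrt{\gamma}$, reducing the field of definition of $P_1$. In each case the Galois group of the relevant finite extension of $F$ has bounded order, and the Galois trace of $P_2$ (respectively $P_1$) is an $F(t)$-rational point $R_2$ (respectively $R_1$); case (iii) yields both $R_1$ and $R_2$ simultaneously.

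The main technical obstacle is to show that these traces are non-torsion and, in case (iii), linearly independent over $\mathbb{Z}$. I tackle this via the height pairing on the elliptic surface fibration associated to $E/F(t)$ developed in \S\ref{sub:heights}: the pairing is Galois-equivariant on $E(\overline{F}(t))/\mathrm{tors}$, so up to the order of the Galois orbit $R_i$ equals the projection of $P_i$ onto the trivial isotypic component, and computing $\langle P_i,\sigma(P_i)\rangle$ for the generating Galois elements $\sigma$ shows that this projection is non-zero in each case. In case (iii) non-degeneracy of the $2\times 2$ Gram matrix of $\{R_1,R_2\}$ follows from the $\mathbb{Z}$-independence of $P_1,P_2$ in Theorem \ref{theorem:Main_theorem_geometric_MW}. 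Finally, Silverman's specialization theorem yields an injection $E(F(t)) \hookrightarrow E_{t_0}(F)$ for all but finitely many $t_0 \in F$, transferring the rank lower bound to the claimed infinite family of triples $(a,b,c)$.
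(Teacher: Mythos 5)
Your overall strategy (base change to $\overline{F}(t)$, Galois descent by traces, then specialization) can be made to work, but the step on which everything hinges --- showing that the traces $R_{1},R_{2}$ are non-torsion --- is only asserted, and it is precisely there that the hypotheses (i)--(iii) must enter; leaving it as ``computing $\langle P_{i},\sigma(P_{i})\rangle$ shows the projection is non-zero'' is a genuine gap. Concretely: write $\tilde{g}=\sqrt{\beta}g$, $\tilde{h}=\sqrt{\gamma}h$, so that $P_{1}=\bigl(-(1+\sqrt{2})\tilde{g}(\tilde{g}-\tilde{h}),\,\sqrt{-1}(1+\sqrt{2})\tilde{g}(\tilde{g}-\tilde{h})(\sqrt{2}\tilde{g}-\tilde{h})\bigr)$. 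Any automorphism $\sigma$ with $\sigma(\sqrt{-1})=-\sqrt{-1}$ that fixes $\sqrt{2},\sqrt{\beta},\sqrt{\gamma}$ fixes $x(P_{1})$ and negates $y(P_{1})$, i.e.\ $\sigma(P_{1})=-P_{1}$; if such a $\sigma$ lies in the Galois group you trace over, the trace is exactly $O$. Hypothesis (i) is what excludes this: $\sqrt{-2\gamma}\in F$ forces any $\sigma$ negating $\sqrt{-1}$ and fixing $\sqrt{2}$ to negate $\sqrt{\gamma}$ as well. Since $\sigma(P_{i})=\pm P_{i}+(\text{torsion})$ by height considerations, the quantity $\langle P_{i},\sigma(P_{i})\rangle\in\{\pm\tfrac{1}{2}\deg f/2\}$ whose sign you need is not a routine verification but the entire arithmetic content of the theorem, and your proposal does not carry it out.

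The paper sidesteps the descent computation entirely by using the doubled points $Q_{1}=-2P_{1}$ and $Q_{2}=-2P_{2}$ of Lemma \ref{lemma:rank_2_subgroup_E} rather than the generators $P_{1},P_{2}$: after rescaling, $Q_{1}=(-\beta g^{2},\sqrt{-2\gamma}\,\beta g^{2}h)$ and $Q_{2}=(\gamma h^{2},\sqrt{\alpha\beta\gamma}\,fgh)$, so $Q_{1}$ is literally an $F(t)$-rational point under (i) and $Q_{2}$ under (ii), no trace needed; their linear independence comes from Lemma \ref{lemma:rank_2_subgroup_E} and is preserved by the injective specialization map, giving (iii). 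If you want to salvage your version, the quickest repair is to note that under the relevant hypothesis $\sigma(Q_{i})=Q_{i}$ for all $\sigma$, hence $\sigma(P_{i})-P_{i}\in E[2]$, hence the trace equals $|G|P_{i}$ modulo torsion --- but at that point you have reproduced the paper's observation and the trace machinery is superfluous.
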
 
For certain choices of $\alpha,\beta$ and $\gamma$ the rank bound from Theorem \ref{theorem:main_theorem_ranks_over_number_fields} can be improved. Let $f_{1}=t^{2}+2^{5}$, $g_{1}=-2^{4}t$ and $h_{1}=-(t^{2}-2^{5})$ be polynomials in $\mathbb{Q}[t]$. Define a set
\begin{equation}\label{equation:specialized_rank_3_set}
S=\left\{\left(f_{1}(\frac{-2^4 t}{-10+t^2}),g_{1}(\frac{-2^4 t}{-10+t^2}),h_{1}(\frac{-2^4 t}{-10+t^2})\right):t\in\mathbb{Q}^{\times}\right\}
\end{equation}
of triples of rational numbers. Observe that if $(a,b,c)$ belongs to $S$, then $-2a^2+b^2=-2c^2$. The following theorem holds.
\begin{corollary}\label{corollary:Best_rank_result}
There exists a finite subset $S_{0}\subset S$ such that for all $(a,b,c)\in S\setminus S_{0}$ the curve
\begin{equation}
E_{a,b,c}: y^2 = x(x+2a^2)(x-b^2)
\end{equation}
is elliptic and the rank of the group $E_{a,b,c}(\mathbb{Q})$ is at least $3$.
\end{corollary}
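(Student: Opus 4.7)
The plan is to reinterpret the family $\{E_{a,b,c}:(a,b,c)\in S\}$ as specializations of a single elliptic curve $\mathcal{E}$ over $\mathbb{Q}(t)$ and to reduce the corollary, via Silverman's specialization theorem, to the statement $\operatorname{rank}\mathcal{E}(\mathbb{Q}(t))\ge 3$. Clearing denominators in the definition of $S$, the triple associated to a given $t_0\in\mathbb{Q}^\times$ agrees, up to a common rational scalar, with $(A(t_0),B(t_0),C(t_0))$, where
\[A(t)=t^4-12t^2+100,\quad B(t)=8t(t^2-10),\quad C(t)=t^4-28t^2+100;\]
such a rescaling of $(a,b,c)$ modifies the Weierstrass equation $y^2=x(x+2a^2)(x-b^2)$ only by an admissible change of variables, so it preserves the Mordell--Weil rank. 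A direct expansion gives $-2A^2+B^2=-2C^2$, so $\mathcal{E}\colon y^2=x(x+2A(t)^2)(x-B(t)^2)$ defines an elliptic curve over $\mathbb{Q}(t)$.

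The triple $(\alpha,\beta,\gamma)=(-2,1,-2)$ satisfies both conditions in part (iii) of Theorem~\ref{theorem:main_theorem_ranks_over_number_fields}, since $-2\gamma=4$ and $\alpha\beta\gamma=4$ are both squares in $\mathbb{Q}^\times$. Inspecting the proof of that theorem therefore produces two $\mathbb{Q}(t)$-rational points $P_1,P_2\in\mathcal{E}(\mathbb{Q}(t))$ that are independent modulo torsion. The substitution $s=-16t/(t^2-10)$ defining $S$ was chosen precisely so that the identity
\[A(t)-C(t)=(4t)^2\]
also holds---equivalently, setting $u=4t$ and $V=t^2-10$ we have $2A=u^2+2V^2$, $B=2uV$ and $2C=2V^2-u^2$. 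This additional square factorization enlarges the image of a suitable $2$-isogeny descent on $\mathcal{E}$ by one class and thereby yields an explicit third $\mathbb{Q}(t)$-rational point $P_3$.

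The crux is to verify that $P_1,P_2,P_3$ are $\mathbb{Z}$-linearly independent modulo torsion. Two routes are available: compute the canonical height pairing on the elliptic surface attached to $\mathcal{E}$ using the machinery of \S\ref{sub:heights} and check that the $3\times 3$ Gram matrix has nonzero determinant; or specialize at a small rational value, e.g.\ $t_0=1$, compute the specialized points on the corresponding elliptic curve $E/\mathbb{Q}$, and confirm via a standard $2$-descent plus height computation that they span a subgroup of rank $3$. Since specialization is a group homomorphism, independence downstairs forces independence upstairs. This check is the main obstacle of the argument; the remainder is formal.

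Once $\operatorname{rank}\mathcal{E}(\mathbb{Q}(t))\ge 3$ is established, Silverman's specialization theorem provides a finite set $S_0'\subset\mathbb{Q}$ outside which the specialization map $\mathcal{E}(\mathbb{Q}(t))\to E_{A(t_0),B(t_0),C(t_0)}(\mathbb{Q})$ is injective on $\langle P_1,P_2,P_3\rangle$; for such $t_0$ the specialized rank is at least $3$. Taking $S_0\subset S$ to be the (finite) image of $S_0'$ together with any remaining values of $t_0$ at which the specialized equation fails to be elliptic completes the proof.
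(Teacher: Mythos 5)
Your overall strategy --- pass to a single curve $\mathcal{E}$ over $\mathbb{Q}(t)$, prove $\operatorname{rank}\mathcal{E}(\mathbb{Q}(t))\geq 3$, and conclude by Silverman's specialization theorem --- is exactly the skeleton of the paper's proof, and your reduction steps (clearing denominators, the identity $-2A^2+B^2=-2C^2$, the two points supplied by Theorem \ref{theorem:main_theorem_ranks_over_number_fields}(iii), and the final specialization argument) are all sound. The problem is that the entire mathematical content of the corollary lies in the one step you do not carry out: producing the third point and proving independence. Rank $\geq 2$ is already the content of Theorem \ref{theorem:main_theorem_ranks_over_number_fields}; the corollary is precisely the claim that this particular base change $u=-16t/(t^2-10)$ buys one more independent point. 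You assert that a ``suitable $2$-isogeny descent'' is ``enlarged by one class and thereby yields an explicit third point $P_3$,'' but an enlarged descent image does not by itself produce a rational point (one must solve the corresponding torsor), and you never write $P_3$ down. You then say the independence check ``is the main obstacle of the argument; the remainder is formal'' and list two methods without executing either. That is a statement of a plan, not a proof. Note also that the identity $A-C=(4t)^2$ you single out is not the relevant extra structure: the paper's third point is $\widetilde{Q}_3=\bigl(-2^6a,\,2^3a\sqrt{2(a-32)(64a+b^2)}\bigr)$, whose rationality on $S$ comes down to $t^4+20t^2+100=(t^2+10)^2$ making $2(5u^2+32)$ a square, a different identity.

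For comparison, the paper does not construct $P_3$ by descent at all. In Example \ref{example:parametrization_rank_3} it exhibits an explicit $\overline{\mathbb{Q}}(t)$-isomorphism $\phi$ from a twist $E_3^{\sigma}$ of the curve $E_3$ of \cite{Naskrecki_Acta} (whose full Mordell--Weil group over $\overline{\mathbb{Q}}(t)$, of rank $3$, is known from that paper) onto the present curve $E_4$; this transports three independent generators, and a Galois fixed-point computation as in Corollary \ref{corollary:Generic_rank_1_Q_t} shows that the twisting by $\sigma(t)=t/\sqrt{-2}$ makes all three survive down to $\mathbb{Q}(t)$, giving $E_4(\mathbb{Q}(t))\cong\mathbb{Z}^3\oplus\mathbb{Z}/2\mathbb{Z}\oplus\mathbb{Z}/2\mathbb{Z}$. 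If you want to complete your version independently of \cite{Naskrecki_Acta}, you must exhibit $P_3$ explicitly (e.g.\ the point with $x$-coordinate $-2^6 f_4/\sqrt{-2}$) and actually compute the $3\times 3$ height Gram matrix on the associated elliptic surface, or carry out the specialization-at-$t_0$ independence check; as written, the proof has a hole exactly where the new content should be.
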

\begin{remark}\label{remark:explicit_rank_3_subgroup}
The corollary above gives and example on how we can improve the rank result stated in Theorem \ref{theorem:main_theorem_ranks_over_number_fields}. In particular Theorem \ref{theorem:main_theorem_ranks_over_number_fields} implies that $\textrm{rank } E_{a,b,c}(\mathbb{Q})\geq 2$ and explicitely there exist a pair of linearly independent points, namely
\begin{align*}
\widetilde{Q}_{1}=&(-b^2,-2b^2c),\\
\widetilde{Q}_{2}=&(-2c^2,-2abc).
\end{align*}
In Corollary \ref{corollary:Best_rank_result} we raise the rank by one at the cost of making the set of admissible triples $(a,b,c)$ smaller but still infinite. In this particular situation we can check that if $(a,b,c)\in S$ then $2(-32 + a)(64 a + b^2)$ is a square in $\mathbb{Q}$, hence we can find yet another linearly independent point in $E_{a,b,c}(\mathbb{Q})$
\[\widetilde{Q}_{3}=\left(-2^6 a,2^3 a\sqrt{2(-32 + a)(64 a + b^2)}\right).\]
\end{remark}

\begin{remark}\label{remark:rank_3_curve_E_4}
As will be explained later, the result obtained in Corollary \ref{corollary:Best_rank_result} follows from the fact that to the described family we can attach an elliptic surface such that the generic fiber treated as a curve over $\mathbb{Q}(t)$ has Mordell-Weil rank equal to $3$. 
\end{remark}

\section{Elliptic surfaces vs. families}
We will use frequently the notion of elliptic surfaces in what follows, so we recall it in the context that is necessary in this article.
\begin{definition}\label{definition:ell_surface}
Let $k$ be an algebraically closed field. Let $C$ be a smooth projective curve over $k$ and $S$ be a smooth projective surface over $k$. We call a triple $(S,C,\pi)$ an elliptic surface when $\pi:S\rightarrow C$ is a surjective morphism such that
\begin{itemize}
\item there exists a non-empty set $B\subset C(k)$ such that for any $v\in C(k)\setminus B$ the fiber $\pi^{-1}(v)$ is a curve of genus $1$,
\item there exists a section $O: C\rightarrow S$ of the morphism $\pi$,
\item no fiber $\pi^{-1}(v)$ for $v\in C(k)$ contains $(-1)$-curves.
\end{itemize}
\end{definition}

To any elliptic curve over $F(t)$ we can attach the corresponding elliptic surface fibered over $\mathbb{P}^{1}_{F}$. We call it a Kodaira-N\'{e}ron model of $E$ over $F(t)$.

We associate with an element $a\in k$ the function $v_{a}:k(t)\rightarrow \mathbb{Z}\cup\{\infty\}$ which assigns to a rational function $g\in k(t)$ its order of vanishing $v_{a}(g)$ at point $a$. Our convention is that $v_{a}(0)=\infty$. Function $v_{a}$ defines a discrete valuation on the field $k((t-a))$ of Laurent polynomials of variable $t-a$. We should emphasize the role of $k$, but in our applications it will always be a fixed algebraic closure $\overline{\mathbb{Q}}$ of the field of rational numbers $\mathbb{Q}$.

When $E$ is a Weierstrass model of an elliptic curve over $F(t)$ for $F$ a number field or $\overline{\mathbb{Q}}$, we say that the equation $E$ is $v_{a}$-minimal if it is defined over $F[t]$ and is $v_{a}$-minimal in the usual sense as a model of elliptic curve over the local field $\overline{F}((t-a))$.

For an elliptic surface $(S,\mathbb{P}^{1}_{k},\pi)$ the preimage with respect to $\pi$ of the generic point is an elliptic curve $E$ over the function field $k(\mathbb{P}^{1}_{k})$. There is a small ambiguity of the choice of the local parameter that generates the function field $k(\mathbb{P}^{1}_{k})$ and which also determines a corresponding Weierstrass equation in local coordinates. For $t\in k(\mathbb{P}^{1}_{k})$ such that $t([X:Y])=X/Y$ we write a model of $E$
\[E_{1}:y^2+a_{1}xy+a_{3}y=x^3+a_{2}x^2+a_{4}x+a_{6},\quad a_{1},a_{2},a_{3},a_{4},a_{6}\in k(t).\]
For the function $s\in k(\mathbb{P}^{1}_{k})$ such that $s([X:Y])=Y/X$ we define a model
\[E_{2}:(y')^2+a'_{1}x'y'+a'_{3}y'=x'^3+a'_{2}x'^2+a'_{4}x'+a'_{6},\quad a'_{1},a'_{2},a'_{3},a'_{4},a'_{6}\in k(s).\]
When the model $E_{1}$ is $v_{a}$-minimal for some $a\neq 0$, it does not necessarily implies that the model $E_{2}$ is also $v_{a}$-minimal. In order to achieve a model that is optimal for computations, we first minimize it with a local parameter $t-a$ at all places $v_{a}$, where $a\in k$. This is always possible since $k[t]$ is a principal ideal domain, cf. \cite[VIII,\S 8]{Silverman_arithmetic}. Then, we replace in our new model $E_{1}'$ minimal at all $a$, the variable $t$ by $1/s$ and we perform a change of coordinates $(x,y)\mapsto (x/s^{2n},y/s^{3n})$ and we choose the least integer $n$ that satisfies for all $i$ the condition $\deg_{t}(a_{i}(t))\leq n i$. Our new model $E_{2}'$ will have the coefficients $a_{i}'(s)=s^{ni}a_{i}(1/s)\in k[s]$. Now, we can check if our model $E_{2}'$ is minimal at $s=0$ (we say $v_{\infty}$-minimal or minimal at $\infty$).

If the model $E_{1}'$ is minimal at all $a\in k$ and at $\infty$ we say that it is \textit{globally minimal}.

Application of the Chinese remainder theorem like in \cite[VII.8]{Silverman_arithmetic} allows us to assume that the globally minimal model can be obtained from the original model by a change of coordinates
\[x\mapsto u^2 x'+r\quad y\mapsto u^3 y'+u^2 s x'+w\]
between two Weierstrass forms over $k(t)$ where $u,s,r,w$ all lie in $k[t]$. We say such a change of coordinates is \textit{admissible} if $u$ is a nonzero constant in $k$. Every globally minimal model is unique up to an admissible change of coordinates. We assume $a_{i}$ are the coefficients of the original Weierstrass form and $a_{i}'$ are the coefficients of the form after the transformation. By a direct computation we get the following identities

\begin{align*}
ua_{1}'&=a_{1}+2s\\
u^2 a_{2}'&=a_{2}-sa_{1}+3r-s^2\\
u^3a_{3}'&=a_{3}+ra_{1}+2w\\
u^4a_{4}'&=a_{4}-sa_{3}+2ra_{2}-(w+rs)a_{1}+3r^2-2sw\\
u^6a_{6}'&=a_{6}+ra_{4}+r^2a_{2}+r^3-wa_{3}-t^2-rwa_{1}
\end{align*}
There is a useful criterion which makes it easy to check when the model is globally minimal.

\begin{theorem}[Globally minimal Weierstrass model]\label{theorem:globally_minimal_condition}
Let $E$ be an elliptic curve over $k(t)$. The Weierstrass model of curve $E$ with coefficients $a_{i}(t)\in k(t)$ is globally minimal if and only if there exists an $n\in\mathbb{N}$ such that the following conditions hold
\begin{itemize}
\item[(i)] for any $i$ we have $a_{i}\in k[t]$,
\item[(ii)] for any $i$ the inequality $\deg a_{i}(t)\leq n i$ holds,
\item[(iii)] there exists an $i$ such that $\deg a_{i}'(t) \geq (n-1)i$, where coefficients $a_{i}'$ come from any admissible change of coordinates,
\item[(iv)] for any $a\in k$ there exists an $i$ such that $v_{a}(a_{i}')<i$, where coefficients $a_{i}'$ come from any admissible change of coordinates,
\end{itemize}
\end{theorem}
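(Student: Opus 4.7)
The plan is to reduce global minimality to simultaneous local minimality at every closed point of $\mathbb{P}^{1}_{k}$, which is either a finite point $a \in k$ with local uniformizer $t-a$ or the point at infinity with uniformizer $s = 1/t$. The engine is the classical local minimality criterion (as in Silverman, \emph{Advanced Topics on Elliptic Curves}, Ch.~IV): a Weierstrass model over a discrete valuation ring with uniformizer $\pi$ is $v_{\pi}$-minimal if and only if, after every admissible change of coordinates (with $u$ a unit), some transformed coefficient $a_{i}'$ satisfies $v_{\pi}(a_{i}') < i$. The proof will translate this criterion place by place into the degree conditions (i)--(iv).

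For the forward direction, suppose the model is globally minimal. Since it is $v_{a}$-minimal at every $a \in k$, the coefficients cannot have poles at any finite place, so $a_{i} \in k[t]$, giving (i). The existence of an $n$ satisfying (ii) is precisely the condition for the construction preceding the theorem --- the substitution $t \mapsto 1/s$ followed by $(x,y) \mapsto (x/s^{2n}, y/s^{3n})$ --- to yield coefficients $s^{ni} a_{i}(1/s)$ lying in $k[s]$; the smallest valid choice is $n = \max_{i} \lceil \deg a_{i}/i \rceil$. Applying the local criterion at $\pi = t-a$ gives (iv) directly. Applying it at $\pi = s$ and using the identity $v_{s}(s^{ni} a_{i}'(1/s)) = ni - \deg_{t} a_{i}'$ converts the inequality $v_{s}(\cdot) < i$ into the degree condition (iii), which also forces $n$ to be the minimal admissible integer in (ii).

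For the converse, I would run the same argument in reverse. Conditions (i) and (iv), together with the local criterion above, deliver $v_{a}$-minimality for every $a \in k$. Conditions (ii) and (iii) place us in exactly the situation where the rescaled model at $\infty$ is defined over $k[s]$ with $n$ minimal, and the local criterion reformulated via degrees then yields $v_{\infty}$-minimality, so the model is globally minimal.

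I expect the main obstacle to be the careful bookkeeping at the point at infinity: one must verify that admissible transformations over $k[t]$ acting on the model in the $t$-coordinate correspond bijectively to admissible transformations over $k[s]$ acting on the rescaled model, and that (ii) and (iii) function in tandem --- one ensuring the well-definedness of the infinity model, the other precluding its improvement by lowering $n$. The finite-place direction is, by contrast, a direct invocation of the classical local minimality criterion applied separately at each $a \in k$.
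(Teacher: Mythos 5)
Your proposal is correct and follows essentially the same route as the paper: both reduce global minimality to local minimality at each finite place $a\in k$ (yielding (i) and (iv)) and at $s=1/t=0$ (yielding (ii) and (iii) via the translation $v_{s}(s^{ni}a_{i}(1/s))=ni-\deg_{t}a_{i}$), using the standard local criterion that a model is non-minimal exactly when some admissible change of coordinates makes $v(a_{i}')\geq i$ for all $i$. The only cosmetic difference is that the paper argues the converse by contraposition while you argue it directly.
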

\begin{proof}
The proof is based on \cite[\S 8.2]{Shioda_Schutt}.

($\Rightarrow$) We assume that the model of $E$ is globally minimal. By the very definition $a_{i}\in k[t]$ and for every $a\in k$ there exists an $i$ such that $v_{a}(a_{i})$ -- the order of vanishing at $a$ of the function $a_{i}$ satisfies $v_{a}(a_{i})<i$. If not, then for every $i$ and some $a\in k$ we would have $v_{a}(a_{i})\geq i$ and the change of coordinates $(x,y)\mapsto (x/(t-a)^2,y/(t-a)^3)$ would decrease the valuation $v_{a}$ of $a_{i}$ but will not destroy the property $a_{i}\in k[t]$. The same property will hold for any model obtained by an admissible change of coordinates, cf. \cite[VII, Prop. 1.3(b)]{Silverman_arithmetic} For $a=\infty$ the minimality means that there exists a natural number $n$ such that $a_{i}'(s)=s^{ni}a_{i}(1/s)\in k[s]$, which is equivalent to $\deg_{t}(a_{i})\leq ni$. Moreover, from the minimality at $\infty$ we deduce that there is an $i$ such that $v_{s}(a_{i}'(s))\leq i$, equivalently $\deg_{t}(a_{i}(t))\geq (n-1)i$. Again this will hold for any admissible change of coordinates. This finishes the proof of the implication.

($\Leftarrow$) We will prove the implication

\textit{(*) If the model of $E$ is not globally minimal, then for every $n\in\mathbb{N}$ the alternative of negations of conditions (i),(ii),(iii), (iv) holds.}

Let us assume that the model of $E$ is not globally minimal. If for some $i$ we have $a_{i}\notin k[t]$, then the condition (i) cannot hold, so implication (*) is true. We assume from now on that (i) holds.

If for an $a\in k$ the model of $E$ is not $v_{a}$-minimal, then condition (iv) does not hold and (*) is true. So we assume now also that (iv) holds.

For sufficiently small $n\in\mathbb{N}$ there is an $i$ such that $\deg_{t} (a_{i}(t))>ni$, then (ii) can't hold and (*) is true.

So now we assume that $n$ is sufficiently big. If the model of $E$ is not globally minimal and (iv) holds, then it can't be minimal at $\infty$. But from condition (ii) it follows that the coefficients $s^{ni}a_{i}(1/s)$ all lie in  $k[s]$. So for all $i$ we have the inequality $v_{s}(a_{i}')>i$, where $a_{i}'$ come from an admissible change of coordinates of the model with coefficients $s^{ni}a_{i}(1/s)$. This is equivalent to $ni-\deg_{t}(a_{i}(t))>i$ where $a_{i}$ might come from an admissible change of coordinates. So the condition (iii) does not hold, which is a contradiction, hence (*) holds.
\end{proof}

\begin{remark}
Given the globally minimal Weierstrass equation over $\overline{\mathbb{Q}}(t)$, a point $(x(t),y(t))$ will transform after the change of coordinates $t\mapsto 1/s$ into 
\[(x(1/s)/s^{2n},y(1/s)/s^{3n})\]
where $n$ is the least integer $n$ determined by Theorem \ref{theorem:globally_minimal_condition}.
\end{remark}

\section{Proofs}

\begin{lemma}\label{lemma:globally_minimal_model_f_g}
Let $f,g\in\overline{\mathbb{Q}}[t]$ be two coprime polynomials. The equation
\begin{equation}\label{eq:general_equation_f_g}
y^2=x(x-f^2)(x-g^2)
\end{equation}
is a globally minimal Weierstrass model.
\end{lemma}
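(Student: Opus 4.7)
The plan is to apply Theorem \ref{theorem:globally_minimal_condition} by writing \eqref{eq:general_equation_f_g} in standard Weierstrass form and verifying its four conditions. Expanding the right-hand side one obtains $a_1 = a_3 = a_6 = 0$, $a_2 = -(f^2 + g^2)$, and $a_4 = f^2 g^2$. Setting $n := \max(\deg f, \deg g)$, condition (i) is immediate since $f, g \in \overline{\mathbb{Q}}[t]$, and condition (ii) is a direct degree estimate: $\deg a_2 \leq 2n$ and $\deg a_4 = 2\deg f + 2\deg g \leq 4n$.

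For condition (iii) I would argue by cases. If the leading coefficients of $f^2$ and $g^2$ do not cancel, then $\deg a_2 = 2n \geq 2(n-1)$; otherwise we must have $\deg f = \deg g = n$, and in this situation $\deg a_4 = 4n \geq 4(n-1)$. Either way some $i$ witnesses the required inequality for the original model. Since condition (iii) is equivalent to $v_\infty$-minimality once (i) and (ii) are in hand, and $v_\infty$-minimality is an isomorphism invariant, the inequality persists after any admissible change of coordinates; a direct verification via the transformation formulas recorded just before Theorem \ref{theorem:globally_minimal_condition} can also be carried out by a leading-degree argument.

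The substantive content lies in condition (iv), and this is the precise point where coprimality of $f$ and $g$ enters. Fix any $a \in k$. If both $f(a) \neq 0$ and $g(a) \neq 0$, then $v_a(a_4) = v_a(f^2 g^2) = 0 < 4$. If $f(a) = 0$, coprimality forces $g(a) \neq 0$, so $(f^2 + g^2)(a) = g(a)^2 \neq 0$ and hence $v_a(a_2) = 0 < 2$; the case $g(a) = 0$ is symmetric. Thus at every finite place some $a_i$ has valuation strictly less than $i$, which is precisely $v_a$-minimality. Because admissible changes (with $u \in k^\times$ constant) are Weierstrass isomorphisms that do not rescale at any finite $v_a$, this minimality is preserved, establishing (iv) under every admissible change of coordinates.

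The main obstacle, such as it is, is the degenerate subcase of (iii) where $\deg f = \deg g$ and the leading coefficients of $f^2$ and $g^2$ cancel so that $\deg a_2$ drops below $2n$; the case division above handles it by passing to $a_4$. Apart from this, the proof is essentially a careful bookkeeping exercise: coprimality controls the finite places through (iv) and the degree hypothesis controls the behaviour at infinity through (ii), (iii).
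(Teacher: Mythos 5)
Your verification of conditions (i) and (ii) of Theorem \ref{theorem:globally_minimal_condition} is fine, but your treatment of (iii) and (iv) has a genuine gap. Both conditions are quantified over \emph{all} admissible changes of coordinates $(u,r,s,w)$, where $u$ is constant but $r,s,w$ are arbitrary polynomials, and this quantifier cannot be discharged by inspecting the original coefficients alone. Your key assertion for (iv) --- that having some $a_i$ with $v_a(a_i)<i$ ``is precisely $v_a$-minimality'' --- is false: that condition is necessary for $v_a$-minimality but not sufficient. For example, substituting $x'=(x-1)/\varpi^2$, $y'=y/\varpi^3$ into the minimal model $y'^2=x'^3+x'$ produces the non-minimal integral model $y^2=x^3-3x^2+(3+\varpi^4)x-(1+\varpi^4)$, which nevertheless has $v(a_2)=0<2$. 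So your chain ``original coefficients pass the valuation test $\Rightarrow$ the model is minimal $\Rightarrow$ every admissible transform passes the test'' breaks at the first arrow. The same objection applies to (iii): you verify the degree condition only for the original $a_2,a_4$ and then appeal to invariance of $v_\infty$-minimality, which presupposes exactly the minimality you are trying to establish; the parenthetical remark that a ``leading-degree argument'' via the transformation formulas ``can also be carried out'' is precisely the nontrivial step, and it is not carried out.

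This is where the paper's proof does its real work. For (iv) it assumes $v_a(a_i')\geq i$ for the coefficients of an \emph{arbitrary} admissible transform and successively extracts $v(s)\geq 1$, $v(w)\geq 3$, $v(r)\geq 1$, and finally $v(a_2)\geq 1$ and $v(a_4)\geq 1$, which forces $f^2+g^2$ and $f^2g^2$ to share a root, contradicting coprimality. For (iii) it assumes all five degree inequalities fail for an arbitrary transform and derives incompatible relations among the leading coefficients of $r$, $a_2$ and $a_4$ (the system \eqref{eq:first_leading}--\eqref{eq:third_leading}). Your argument could be repaired without this bookkeeping by passing to genuinely invariant quantities: in residue characteristic $0$ the inequality $v_a(c_4)<4$ already implies $v_a$-minimality, and here $c_4=16(f^4-f^2g^2+g^4)$ satisfies $v_a(c_4)=0$ at every finite place of bad reduction because $f^4-f^2g^2+g^4$ shares no root with $fg(f^2-g^2)$ when $f,g$ are coprime; a similar count using $c_4$ and $\Delta$ handles the place at infinity. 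As written, however, the proposal does not establish (iii) or (iv) in the form the theorem requires.
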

\begin{proof}
Without loss of generality we can assume that $\deg g\leq \deg f$. For $i\in \{1,3,6\}$ we have $a_{i}=0$ and $a_{2}=-f^2-g^2$, $a_{4}=f^2 g^2$. Since all $a_{i}(t)$ are polynomials, condition (i) from Theorem \ref{theorem:globally_minimal_condition} is satisfied. Let $n=\deg f$. It follows that $\deg a_{2}\leq 2\max\{\deg f,\deg g\}=2n$ and $\deg a_{4}=2(\deg f+\deg g)\leq 4\deg f=4n$, thus condition (ii) holds. 

\noindent
Let us prove that condition (iv) holds true. Our local field is $K=\overline{F}((t-a))$ and $v=v_{a}$. We denote by $R$ the ring of integers of $K$ with respect to $v$. We have to analyze the valuations $v(a_{i}')$ for all models that come from admissible change of coordinates of the original model.
\begin{align}
v(a_{1}')&=v(2s)=v(s)\\
v(a_{2}')&=v(a_{2}+3r-s^2)\\
v(a_{3}')&=v(2w)=v(w)\\
v(a_{4}')&=v(a_{4}+2ra_{2}+3r^2-2sw)\label{eq:a4}\\
v(a_{6}')&=v(ra_{4}+r^2a_{2}+r^3-w^2)\label{eq:a6}
\end{align}
Assume that $v(a_{i}')\geq i$ for all $i$. Let $\varpi$ be a certain uniformizer for $v$, namely $v(\varpi)=1$. Then it readily follows that $s=s_{0}\cdot \varpi$ and $w=w_{0}\cdot \varpi^3$ for certain $s_{0},w_{0}\in k[t]$. Next we have $a_{2}+3r-s^2=a_{2,0}\cdot \varpi^2$ with $a_{2,0}\in k[t]$. This implies the equality
\begin{equation}\label{a2_val}
a_{2}+3r=a_{2,1}\varpi^2
\end{equation}
with $a_{2,1}\in R$. From \eqref{eq:a4} it follows that
\begin{equation}\label{eq:a4_val}
a_{4}+2ra_{2}+3r^2=a_{4,0}\varpi^{4}
\end{equation}
with $a_{4,0}\in R$. Then $a_{4}+2ra_{2}+3r^2=a_{4}+ra_{2}+r(a_{2}+3r)=a_{4}+ra_{2}+r(a_{2,1}\varpi^2)$ and combining this with the previous equation implies $a_{4}+ra_{2}=a_{4,1}\varpi^{2}$ where $a_{4,1}\in R$. Equation \eqref{eq:a6} implies that $ra_{4}+r^2a_{2}+r^3=a_{6,0}\varpi^6$ and then
\[r(a_{4,1}\varpi^2+r^2)=a_{6,0}\varpi^6.\]
It follows that $v(r)\geq 1$. Then equation \eqref{eq:a4_val} implies that $v(a_{4})\geq 1$ and from \eqref{a2_val} we get that $v(a_{2})\geq 1$. Explicitly this implies that polynomials $f^2+g^2$ and $f^2 g^2$ have a common root, which contradicts the assumption $f,g$ being coprime.

\medskip\noindent
We deal now with condition (iii). We assume from the beginning that $\deg g\leq \deg f$. Put $n=\deg f$. Let us assume now that for any fixed admissible change of coordinates condition (iii) is not satisfied, namely
\begin{align}
\deg(s)&<(n-1)\label{al:a1_deg}\\
\deg(a_{2}+3r-s^2)&<2(n-1)\label{al:a2_deg}\\
\deg(w)&<3(n-1)\label{al:a3_deg}\\
\deg(a_{4}+2ra_{2}+3r^{2}-2sw)&<4(n-1)\label{al:a4_deg}\\\
\deg(ra_{4}+r^{2}a_{2}+r^{3}-w^{2})&<6(n-1)\label{al:a6_deg}
\end{align}
From equation \eqref{eq:general_equation_f_g} we have $a_{2}=-f^{2}-g^{2}$ and $a_{4}=f^{2}g^{2}$. Let us proceed first with the case $\deg g <\deg f$.
From the assumptions we get $\deg(a_{4})<4n$ and $\deg(a_{2})=2n$. From \eqref{al:a2_deg} it follows that $\deg r =2n$. Hence by \eqref{al:a4_deg} we get $\deg(2ra_{2}+3r^{2})<4n$. Also we know that $\deg(a_{2}+3r)<2n$ by \eqref{al:a2_deg}, so $\deg(ra_{2}+3r^{2})<4n$ by additive property of the degree function. This implies the inequality $\deg(ra_{2})<4n$, which gives a contradiction.

\medskip\noindent
We assume for the next part that $\deg f=\deg g$. It follows that the equality $\deg(a_{4})=4n$ holds, but for the other coefficient we have only the inequality $\deg(a_{2})\leq 2n$. Inequalities \eqref{al:a1_deg},\eqref{al:a3_deg} and \eqref{al:a4_deg} combined with $\deg(a_{4})=4n$ imply equality $\deg(2ra_{2}+3r^{2})=4n$. Let us denote the leading coefficient of $a_{2}$ by $a_{2}^{0}$, of $r$ by $r^{0}$ and of $a_{4}$ by $a_{4}^{0}$. From the previous equality we deduce that
\begin{equation}\label{eq:first_leading}
a_{4}^{0}+2r^{0}a_{2}^{0}+3(r^{0})^2=0.
\end{equation}
We also have $\deg(r)+\deg(2a_{2}+3r)=4n$. If we assume $\deg(r)<2n$, then $\deg(2ra_{2}+3r^{2})<4n$, a contradiction. So let $\deg(r)\geq 2n$. Inequality \eqref{al:a2_deg} and $\deg(a_{2})\leq 2n$ imply now that $\deg(r)=2n$ and $\deg(a_{2})=2n$, so
\begin{equation}\label{eq:second_leading}
a_{2}^{0}+3r^{0}=0.
\end{equation}
From \eqref{al:a3_deg} and \eqref{al:a6_deg} we deduce $\deg(ra_{4}+r^{2}a_{2}+r^{3})<6(n-1)$. But each term of the polynomial on the left-hand side has degree $6n$, so we obtain the final piece
\begin{equation}\label{eq:third_leading}
r^{0}a_{4}^{0}+(r^{0})^{2}a_{2}^{0}+(r^{0})^{3}=0.
\end{equation}
Now we use \eqref{eq:first_leading} and \eqref{eq:second_leading} to get $a_{4}^{0}=3(r^{0})^{2}$. We divide both sides of \eqref{eq:third_leading} by $r^{0}$ and substitute $a_{2}^{0}$ from \eqref{eq:first_leading}. That implies equality $a_{4}^{0}=2(r^{0})^2$. Hence $2(r^{0})^2=3(r^{0})^2$, a contradiction. So we have proved that condition (iii) holds for all admissible changes of coordinates.

\end{proof}

\noindent
By the results of Oguiso \cite[Theorem 1]{Oguiso_c2} and Shioda \cite[Theorem 2.8]{Shioda_Mordell_Weil} if $E$ is an elliptic curve over $F(t)$ with Kodaira-N\'{e}ron model $(S,\mathbb{P}^{1}_{\overline{F}},\pi)$, the Euler characteristic $\chi(S)=\chi(S,\mathcal{O}_{S})$ is equal to the least number $n$ specified in Theorem \ref{theorem:globally_minimal_condition}. Hence by the proof of Lemma \ref{lemma:globally_minimal_model_f_g} we have $\chi(S)=\deg f$ for the Kodaira-N\'{e}ron model of the curve given by (\ref{eq:general_equation_f_g}).

\begin{lemma}\label{lemma:bad_fibres_types_f_g_curve}
Let $f,g$ be as in previous lemma. The elliptic curve determined by the equation (\ref{eq:general_equation_f_g}) corresponds to an elliptic surface $\mathcal{E}=(S,\mathbb{P}^{1}_{\overline{\mathbb{Q}}},\pi)$ such that all bad fibers are of Kodaira type $I_{n}$ for a suitable value of $n$. More precisely, the discriminant $\Delta$ of equation (\ref{eq:general_equation_f_g}) is
\[16 f^4 g^4 \left(f^2-g^2\right)^2.\]
Moreover
\begin{itemize}
\item[(i)] if $a$ is a root of $f$ or $g$ of multiplicity $e$, then the fiber $\pi^{-1}(a)$ is of type $I_{4e}$,
\item[(ii)] if $a$ is a root of $f^2-g^2$ of multiplicity $e$, then the fiber $\pi^{-1}(a)$ is of type $I_{2e}$,
\item[(iii)] if $a=\infty$ and $\deg f\geq \deg g$, then the fiber $\pi^{-1}(a)$ is of type $I_{n}$ where $n=8\deg f-4\deg g-2\deg(f^2-g^2)$.
\end{itemize}
\end{lemma}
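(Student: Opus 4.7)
\emph{Proof plan.} The strategy is to compute the discriminant of the globally minimal Weierstrass equation directly and then apply the classical criterion that, for a model of the form $y^2=x(x-\alpha)(x-\beta)$ at a place $v$, the reduction is multiplicative of Kodaira type $I_{v(\Delta)}$ precisely when exactly two of the three roots $\{0,\alpha,\beta\}$ coincide modulo $v$ while the third remains distinct (equivalently, $v(c_{4})=0$ and $v(\Delta)>0$). Substituting $e_{1}=0,\, e_{2}=f^{2},\, e_{3}=g^{2}$ into $\Delta=16\prod_{i<j}(e_{i}-e_{j})^{2}$ immediately yields $\Delta=16f^{4}g^{4}(f^{2}-g^{2})^{2}$, and Lemma \ref{lemma:globally_minimal_model_f_g} guarantees that $v_{a}(\Delta)$ is the correct local discriminant exponent at every finite place.

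For case (i), coprimality of $f$ and $g$ forces a zero of $f$ of multiplicity $e$ to be disjoint from the zero set of $g$, so modulo $(t-a)$ the roots $0$ and $f^{2}$ coincide while $g^{2}$ stays distinct; the criterion above then gives multiplicative reduction with $v_{a}(\Delta)=4e$, hence type $I_{4e}$, and the case of $g$ is symmetric. For case (ii), any zero of $f^{2}-g^{2}$ satisfies $f(a)\neq 0\neq g(a)$ (otherwise both vanish, again contradicting coprimality), so the roots $f^{2}\equiv g^{2}$ collide away from $0$; here $v_{a}(\Delta)=2e$ gives $I_{2e}$. A quick check confirms $v_{a}(c_{4})=0$ in both situations, ruling out additive reduction.

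The main obstacle is case (iii), at $v_{\infty}$. I would change to the local parameter $s=1/t$ by the admissible substitution $(x,y)\mapsto(x'/s^{2N},y'/s^{3N})$ with $N=\deg f$ (the Euler characteristic value forced by the proof of the previous lemma), and set $\tilde F(s)=s^{N}f(1/s)$, $\tilde G(s)=s^{N}g(1/s)$, so that the $v_{\infty}$-minimal model reads $y'^{2}=x'(x'-\tilde F(s)^{2})(x'-\tilde G(s)^{2})$. Two sub-cases then arise: if $\deg g<\deg f$, then $\tilde G(0)=0\neq \tilde F(0)$ and the roots $0$ and $\tilde G^{2}$ collide; if $\deg g=\deg f$, then $\tilde F(0)\tilde G(0)\neq 0$ and a short computation gives $v_{0}(\tilde F^{2}-\tilde G^{2})=2N-\deg(f^{2}-g^{2})$, so the roots $\tilde F^{2}$ and $\tilde G^{2}$ collide precisely when $\deg(f^{2}-g^{2})<2N$. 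In every sub-case where the fiber is bad, exactly two of the three roots coincide, the reduction is multiplicative, and tracking degrees yields $v_{\infty}(\Delta)=12N-\deg\Delta=8\deg f-4\deg g-2\deg(f^{2}-g^{2})$, which is the advertised index. The slight delicacy is simply to verify that one never gets a three-way collision at $s=0$ (that would require $\tilde F(0)=\tilde G(0)=0$, impossible since $\tilde F(0)$ is the leading coefficient of $f$) and that the formula degenerates correctly to $n=0$ when the fiber is actually smooth.
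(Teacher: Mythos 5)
Your proposal is correct and takes essentially the same route as the paper: both rely on the global minimality established in Lemma \ref{lemma:globally_minimal_model_f_g}, read off the finite bad fibers from the factorization $\Delta=16f^{4}g^{4}(f^{2}-g^{2})^{2}$ together with coprimality of $f$ and $g$ (the paper delegates this step to Tate's algorithm, while you make the underlying nodal-reduction criterion $v(c_{4})=0$, $v(\Delta)>0$ explicit), and treat the fiber at infinity via the same coordinate change $(x,y)\mapsto(x/s^{2\chi(S)},y/s^{3\chi(S)})$ with $\chi(S)=\deg f$ and the identity $v_{\infty}(\Delta)=12\deg f-\deg\Delta$. The extra detail you supply (the sub-case analysis at $s=0$ and the impossibility of a three-way root collision) is a faithful expansion of what the paper leaves implicit, not a different argument.
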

\begin{proof}
By Lemma \ref{lemma:globally_minimal_model_f_g} the model of $E$ given by (\ref{eq:general_equation_f_g}) is globally minimal. We apply Tate's algorithm \cite{Tate_algorithm_article} to $\mathcal{E}$. Conditions (i) and (ii) of the theorem follow. We assume that $\deg f\geq \deg g$, hence $\chi(S)=\deg f$ and $v_{\infty} (\Delta) =12\deg f-\deg(\Delta)$, $\Delta$ being the discriminant of (\ref{eq:general_equation_f_g}). The change of coordinates $(x,y)\mapsto (x/s^{2\chi(S)},y/s^{3\chi(S)})$ exhibits the minimal model with respect to $s$ (at $\infty$). The reduction type at $\infty$ is therefore $I_{n}$, where $n=v_{\infty}(\Delta)$, again by Tate's algorithm. This completes the proof of (iii).
\end{proof}

\subsection{Torsion subgroup}
In this paragraph we want to compute the group structure of torsion points on curves (\ref{eq:general_equation_f_g}). This is used then in the next sections to establish the structure of the full Mordell-Weil group through the theory of lattices.

\begin{lemma}\label{lemma:torsion_subgroup_general}
Let $f,g\in\overline{\mathbb{Q}}[t]$ be two coprime polynomials. On the curve $E$ with the Weierstrass equation (\ref{eq:general_equation_f_g}) there are two points
\begin{align*}
T_{1}=&(g^2,0),\\
T_{2}=&(fg, \sqrt{-1}f(f - g)g)
\end{align*}
which span a subgroup in $E(\overline{\mathbb{Q}}(t))$ isomorphic to $\mathbb{Z}/2\mathbb{Z}\oplus\mathbb{Z}/4\mathbb{Z}$.  
\end{lemma}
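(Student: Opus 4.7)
The plan is to verify directly that $T_1$ and $T_2$ lie in $E(\overline{\mathbb{Q}}(t))$, to determine their orders by a single application of the doubling formula, and then to check that the cyclic subgroups they generate intersect trivially, which immediately yields the direct-sum decomposition $\mathbb{Z}/2\mathbb{Z}\oplus\mathbb{Z}/4\mathbb{Z}$. The whole argument reduces to elementary Weierstrass coordinate computations.

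First I would substitute into the equation (\ref{eq:general_equation_f_g}). For $T_1=(g^2,0)$ the relation holds trivially. For $T_2=(fg,\sqrt{-1}\,f(f-g)g)$ the right-hand side expands as $fg\cdot f(g-f)\cdot g(f-g) = -f^2g^2(f-g)^2$, which matches the square of the given $y$-coordinate. Since (\ref{eq:general_equation_f_g}) defines an elliptic curve, the discriminant $16 f^4 g^4 (f^2-g^2)^2$ from Lemma~\ref{lemma:bad_fibres_types_f_g_curve} is nonzero, so $f$, $g$, $f-g$, $f+g$ are all nonzero in $\overline{\mathbb{Q}}(t)$; in particular $T_1$ is a nontrivial $2$-torsion point.

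The main step is the computation of $2T_2$. Writing the model as $y^2=x^3+a_2x^2+a_4x$ with $a_2=-(f^2+g^2)$ and $a_4=f^2g^2$, the tangent slope at $T_2$ is
\[\lambda = \left.\frac{3x^2+2a_2 x+a_4}{2y}\right|_{T_2} = \frac{-2fg(f-g)^2}{2\sqrt{-1}\,f(f-g)g} = \sqrt{-1}\,(f-g),\]
where the numerator simplifies via $3(fg)^2-2(f^2+g^2)fg+f^2g^2 = -2fg(f-g)^2$ and the factor $f-g$ cancels because $f\neq g$. The doubling formula then gives
\[x(2T_2) = \lambda^2 - a_2 - 2 x_{T_2} = -(f-g)^2 + (f^2+g^2) - 2fg = 0,\]
so $2T_2=(0,0)$, a nonzero $2$-torsion point. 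Consequently $T_2$ has order exactly $4$: it cannot have order $1$ or $2$ since its $y$-coordinate $\sqrt{-1}\,f(f-g)g$ is nonzero.

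Finally, the unique order-$2$ element of $\langle T_2\rangle$ is $(0,0)$, whereas $T_1=(g^2,0)$ has $x$-coordinate $g^2\neq 0$, so $T_1\notin\langle T_2\rangle$. Therefore $\langle T_1\rangle\cap\langle T_2\rangle=\{O\}$ and $\langle T_1,T_2\rangle = \langle T_1\rangle\oplus\langle T_2\rangle \cong \mathbb{Z}/2\mathbb{Z}\oplus\mathbb{Z}/4\mathbb{Z}$, as required. No genuine obstacle arises; the one place to be careful is tracking the $\sqrt{-1}$ and the cancellation of $f-g$ in the slope, both of which are legitimate under the standing hypothesis that (\ref{eq:general_equation_f_g}) is smooth over $\overline{\mathbb{Q}}(t)$.
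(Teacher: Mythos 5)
Your proof is correct and follows essentially the same route as the paper: both arguments hinge on applying the duplication formula to show $2T_{2}=(0,0)$, a $2$-torsion point distinct from $T_{1}$, so that $T_{2}$ has order $4$ and $\langle T_{1}\rangle\cap\langle T_{2}\rangle$ is trivial. Your version merely spells out the tangent-slope computation and the on-curve check for $T_{2}$ that the paper leaves implicit.
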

\begin{proof}
Let $P=(x,y)\in E(\overline{\mathbb{Q}}(t))$ be a fixed $\overline{\mathbb{Q}}(t)$-rational point. If $P$ is of order $2$ then $P=-P$, hence $y=0$. This implies that $P\in\{(0,0),(g^2,0),(f^2,0)\}$. If $P$ is not of order two, then by the duplication formula we get the $x$-coordinate of the point $2P$
\begin{equation}\label{eq:point_duplication}
x(2P)=\frac{(x-f g)^2 (f g+x)^2}{4 x \left(x-f^2\right) \left(x-g^2\right)}.
\end{equation}
By the formula (\ref{eq:point_duplication}) we get that $x(2T_{2})=0\neq g^2$. We also get that $y(2T_{2})=0$, so $2T_{2}$ is a point of order $2$ different from $T_{1}$. The statement of the lemma follows.
\end{proof}

\begin{corollary}\label{corollary:torsion_structure_f_g_curves_with_assumptions}
Let $f,g\in\overline{\mathbb{Q}}[t]$ be two coprime polynomials. Assume that $\deg f=2$ and $\deg g\leq \deg f$. Moreover, let $f^2-g^2$ be separable. For the elliptic curve $E$ over $\overline{\mathbb{Q}}(t)$ determined by the equation (\ref{eq:general_equation_f_g}) the following holds
\[E(\overline{\mathbb{Q}}(t))_{\textrm{tors}}\cong \mathbb{Z}/2\mathbb{Z}\oplus\mathbb{Z}/4\mathbb{Z}.\]
\end{corollary}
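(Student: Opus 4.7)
The plan is to combine the explicit lower bound from Lemma \ref{lemma:torsion_subgroup_general} with an upper bound deduced from the Kodaira types of Lemma \ref{lemma:bad_fibres_types_f_g_curve}. Write $K = \overline{\mathbb{Q}}(t)$. Since the three nontrivial $2$-torsion points $(0,0), (f^2,0), (g^2,0)$ are $K$-rational, $E[2] \subset E(K)$, and so $E(K)_{\mathrm{tors}} \cong \mathbb{Z}/2a\mathbb{Z} \oplus \mathbb{Z}/2b\mathbb{Z}$ with $a \mid b$. The first step is to observe that, because every bad fibre is of type $I_{n_v}$ and hence has cyclic component group $G_v$, and because the narrow Mordell-Weil lattice $E(K)^{0}$ is torsion-free (positive-definiteness of the height pairing), the local-component map embeds
\[
E(K)_{\mathrm{tors}} \;\hookrightarrow\; \bigoplus_v G_v \;\cong\; \bigoplus_v \mathbb{Z}/n_v\mathbb{Z}.
\]
Under the hypotheses $\deg f = 2$, $\deg g \leq 2$ and $f^2 - g^2$ separable, the explicit formulas in Lemma \ref{lemma:bad_fibres_types_f_g_curve} force every $n_v$ to be a power of $2$ (roots of $f^2 - g^2$ give $n = 2$; roots of $f$ or $g$ give $n \in \{4, 8\}$; the formula at infinity gives a power of $2$), so $E(K)_{\mathrm{tors}}$ is a $2$-group.

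Next I plan to rule out $\mathbb{Z}/4\mathbb{Z} \oplus \mathbb{Z}/4\mathbb{Z}$ by inspection of the halvings of the $2$-torsion points. Using the duplication formula $x(2P) = (x^{2} - f^{2} g^{2})^{2}/\bigl(4 x (x - f^{2})(x - g^{2})\bigr)$, a short computation shows that $x(2P) = f^{2}$ is equivalent to $(x^{2} - 2 f^{2} x + f^{2} g^{2})^{2} = 0$, giving $x = f^{2} \pm f \sqrt{f^{2} - g^{2}}$; and the analogous calculation for $2P = (g^{2}, 0)$ requires $\sqrt{f^{2} - g^{2}} \in K$ as well (since $\sqrt{-1} \in K$). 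Since $f^{2} - g^{2}$ is a non-constant separable polynomial in $\overline{\mathbb{Q}}[t]$, it cannot be a square in $K$. Hence neither $(f^{2}, 0)$ nor $(g^{2}, 0)$ admits a halving in $E(K)$, and the only order-$4$ points of $E(K)$ are the four already in $\langle T_{1}, T_{2}\rangle$. This forces $a = 1$.

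It remains to exclude any halving of $T_{2}$, which would produce a cyclic summand $\mathbb{Z}/2^{k}\mathbb{Z}$ with $k \geq 3$. Setting $x(2P) = fg$ yields the quartic
\[
x^{4} - 4 fg\, x^{3} + \bigl(4 fg (f^{2} + g^{2}) - 2 f^{2} g^{2}\bigr) x^{2} - 4 f^{3} g^{3} x + f^{4} g^{4} = 0,
\]
which under $x = fg \cdot u$ simplifies to the palindromic form $u^{4} - 4 u^{3} + (4 \sigma - 2) u^{2} - 4 u + 1 = 0$ with $\sigma = (f^{2} + g^{2})/(fg)$. The classical resolvent $v = u + 1/u$ then satisfies $(v - 2)^{2} = -4 (f - g)^{2}/(fg)$, so the roots of the quartic are expressible as nested radicals involving $\sqrt{fg}$ and $\sqrt{v^{2} - 4}$. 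I will then verify, in a short case analysis governed by whether $f$ or $g$ is a perfect square and by the precise degree of $g$, that at least one of these radicals fails to lie in $K$, using the coprimality of $f, g$ and the separability of $(f - g)(f + g) = f^{2} - g^{2}$. This gives $b = 2$ and completes the argument.

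The main obstacle is this final step: when some bad fibre has $n_{v} \geq 8$ (for instance when $g$ is constant, producing an $I_{8}$ fibre at infinity, or when $f$ or $g$ is the square of a linear polynomial), the component-group embedding alone does not exclude $\mathbb{Z}/8\mathbb{Z}$, and the halving quartic must be analysed directly. The reduction to palindromic form suggests that the obstructions will be non-square polynomial expressions in $f, g$, essentially of the same flavour as the one that killed $\mathbb{Z}/4\mathbb{Z} \oplus \mathbb{Z}/4\mathbb{Z}$ - but checking this uniformly across all degenerate configurations of $f$ and $g$ is the crux of the proof.
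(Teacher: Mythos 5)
Your first two steps coincide with the paper's: the embedding of the torsion into the product of the (cyclic) component groups of the $I_n$ fibres shows the torsion is a $2$-group of exponent dividing $8$, and the halving computation $x^2-2g^2x+f^2g^2=0$ with discriminant $-4g^2(f^2-g^2)$, killed by the separability of $f^2-g^2$, rules out $(\mathbb{Z}/4\mathbb{Z})^2$ exactly as in the paper. The problem is your third step. You are right that the component-group bound alone does not exclude a point of order $8$ --- under the corollary's hypotheses $I_8$ fibres genuinely occur (e.g.\ $g$ constant forces $I_8$ at infinity since $n=16-0-2\cdot 4=8$, and $f$ with a double root gives $I_8$ over that root; neither is forbidden, as only $f^2-g^2$ is assumed separable). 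But you do not close this case: you set up the halving quartic for $T_2$, reduce it to palindromic form, and then explicitly defer the verification that some radical fails to lie in $K$ "uniformly across all degenerate configurations," calling it the crux. As written, the exclusion of $\mathbb{Z}/2\mathbb{Z}\oplus\mathbb{Z}/8\mathbb{Z}$ is asserted, not proved, so the proposal has a genuine gap.

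The paper avoids this computation entirely by a geometric input you have not used: since the model is globally minimal with $\chi(S)=\deg f=2$, the Kodaira--N\'eron model is an elliptic K3 surface ($p_g=1$), and Cox's classification \cite[Thm.~2.2]{Cox_rational_k3} of Mordell--Weil torsion for such surfaces leaves only $\mathbb{Z}/2\mathbb{Z}\oplus\mathbb{Z}/4\mathbb{Z}$ and $(\mathbb{Z}/4\mathbb{Z})^2$ as candidates containing $\mathbb{Z}/2\mathbb{Z}\oplus\mathbb{Z}/4\mathbb{Z}$; the quadratic-discriminant argument then finishes. If you want to keep your purely computational route you must actually carry out the quartic analysis (note that already the resolvent forces $fg$ to be a square in $K$, which together with coprimality and the degree constraints is a strong restriction), but the cleaner fix is to import the K3 classification as the paper does.
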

\begin{remark}
Points $T_{1},T_{2}$ from Lemma \ref{lemma:torsion_subgroup_general} span the group $E(\overline{\mathbb{Q}}(t))_{\textrm{tors}}$.
\end{remark}
\begin{proof}
Let $\mathcal{E}=(S,\mathbb{P}^{1}_{\overline{\mathbb{Q}}},\pi)$ be the Kodaira-N\'{e}ron model of the curve $E$. By Lemma \ref{lemma:bad_fibres_types_f_g_curve} we know that the elliptic surface $\mathcal{E}$ has bad fibers of types $I_{2}$ and $I_{4}$. Let $B$ denote the subset of $\mathbb{P}^{1}_{\overline{\mathbb{Q}}}(\overline{\mathbb{Q}})$ of points such that for $v\in B$ the fiber $F_{v}=\pi^{-1}(v)$ is not smooth. Let $G(F_{v})$ denote the group of simple components of $F_{v}$ (component group of the N\'{e}ron model of $E$ with respect to valuation at $v$). Then by \cite[Corollary IV.9.2]{Silverman_book} and \cite[Corollary 7.2]{Shioda_Schutt} there exists an injective homomorphism
\begin{equation}\label{equation:injective_torsion_homomorphism}
E(\overline{\mathbb{Q}}(t))_{\textrm{tors}}\hookrightarrow \prod_{v\in B}G(F_{v}).
\end{equation}
For the fibers of type $I_{n}$ the group $G(I_{n})$ equals $\mathbb{Z}/n\mathbb{Z}$. This implies that in our case the group $E(\overline{\mathbb{Q}}(t))_{\textrm{tors}}$ can contain only points of orders dividing $8$. The proof of Lemma \ref{lemma:bad_fibres_types_f_g_curve} implies that $\chi(S)=\deg f=2$, hence $S$ the geometric genus $p_{g}$ of surface $S$ equals $1$. By \cite[Thm. 2.2]{Cox_rational_k3} the torsion subgroup of $E(\overline{\mathbb{Q}}(t))$ is isomorphic to $\mathbb{Z}/2\mathbb{Z}\oplus\mathbb{Z}/4\mathbb{Z}$ or $(\mathbb{Z}/4\mathbb{Z})^2$. We will show that the latter case does not hold.

We already know that the $2$-torsion points are of the form $(f^2,0),(g^2,0)$ or $(0,0)$. According to Lemma \ref{lemma:torsion_subgroup_general}, to prove the corollary it suffices to show that points $(f^2,0)$ and $(g^2,0)$ are not $2$-divisible. So suppose, to the contrary, that there exists a point $P=(x,y)\in E(\overline{\mathbb{Q}}(t))_{\textrm{tors}}$ such that $2P=(g^2,0)$. The duplication formula for $P$ implies that
\[\frac{(x-f g)^2 (f g+x)^2}{4 x \left(x-f^2\right) \left(x-g^2\right)}-g^2=0\]
or equivalently
\begin{equation}\label{equation:quadratic_4_torsion}
-f^2 g^2 + 2 g^2 x - x^2=0.
\end{equation}
The discriminant of the above quadratic equation equals $-4 (f^2 - g^2) g^2 $. But we have assumed that $f^2-g^2$ is separable, hence it cannot be a square in $\overline{\mathbb{Q}}(t)^{\times}$. So the root $x$ of (\ref{equation:quadratic_4_torsion}) cannot lie in $\overline{\mathbb{Q}}(t)$. A similar argument works in the case $2P=(f^2,0)$. So the group generated by $T_{1}$ and $T_{2}$ as described in Lemma \ref{lemma:torsion_subgroup_general} is equal to $E(\overline{\mathbb{Q}}(t))_{\textrm{tors}}$.

\end{proof}

\begin{example}
If we drop the assumption on separability of $f^2-g^2$, then we can easily find polynomials $f,g$ such that the torsion subgroup of $\overline{\mathbb{Q}}(t)$-rational points on curve  (\ref{eq:general_equation_f_g}) is isomorphic to $(\mathbb{Z}/4\mathbb{Z})^2$. Take
\[f=-1+t^2,\quad g=1+t^2.\]
We use the fact that the elliptic surface attached to curve (\ref{eq:general_equation_f_g}) has bad fibers of type $I_{4}$ and then homomorphism (\ref{equation:injective_torsion_homomorphism}) can be used to show that all $\overline{\mathbb{Q}}(t)$-rational torsion points are of order $1$, $2$ or $4$. Generators of this group are points $T_{2}$ and $T_{3}$, where $T_{2}$ comes from Corollary \ref{corollary:torsion_structure_f_g_curves_with_assumptions} and $T_{3}$ satisfies $2T_{3}=(g^2,0)$.

Observe that $f^2+g^2$ is not a square of any polynomial, so the rank of the Mordell-Weil group $E(\overline{\mathbb{Q}}(t))$ is zero as predicted by \cite[Thm. 2.2]{Cox_rational_k3}.
\end{example}

\subsection{Points of infinite order}\label{sub:heights}
For an elliptic curve $E$ over $K=\overline{\mathbb{Q}}(t)$ we denote by $\langle \cdot,\cdot\rangle_{E}$ the height pairing attached to $E$ as in \cite{Shioda_Mordell_Weil}. The group $E(K)/E(K)_{\textrm{tors}}$ with the induced pairing $\langle\cdot,\cdot\rangle_{E}$ is a positive definite lattice, cf. \cite[Theorem 7.4]{Shioda_Mordell_Weil}. To simplify the notation, we write $\langle\cdot,\cdot\rangle$ if the curve $E$ is fixed. Explicitly, for two points $P,Q\in E(K)$ their intersection pairing is given by
\begin{equation}\label{equation:height_pairing_formula}
\langle P,Q\rangle = \chi(S)+\overline{P}.\overline{O}+\overline{Q}.\overline{O}-\overline{P}.\overline{Q}-\sum_{v\in B}c_{v}(P,Q). 
\end{equation}
For a point $P$ in $E(K)$ we denote by $\overline{P}$ the curve which lies in $S$ and is the image of a section determined by point $P$, cf. \cite[Lemma 5.2]{Shioda_Mordell_Weil}. The curve $\overline{O}$ is the image of the zero section $O:\mathbb{P}^{1}_{\overline{\mathbb{Q}}}\rightarrow S$. In the case $P=Q$ the formula simplifies to
\begin{equation}\label{equation:height_formula}
\langle P,P\rangle = 2\chi(S)+2\overline{P}.\overline{O}-\sum_{v\in B}c_{v}(P,P). 
\end{equation}
The rational numbers $c_{v}(P,Q)$ depend on the fiber type above $v\in B$ and on the points $P$ and $Q$, cf. \cite[Theorem 8.6]{Shioda_Mordell_Weil}. We will need explicit version of this theorem only for the case when fibers are of type $I_{n}$. We call $\langle P,P\rangle$ the \textit{height of point} $P$.

\begin{lemma}\label{lemma:height_correction_for_I_N}
Let $f,g\in\overline{\mathbb{Q}}[t]$ be two coprime polynomials. Let $E$ be an elliptic curve with Weierstrass equation (\ref{eq:general_equation_f_g}). Let $B$ be the set of points in $\mathbb{P}^{1}_{\overline{\mathbb{Q}}}(\overline{\mathbb{Q}})$ over which the Kodaira-N\'{e}ron model of $E$ has bad reduction. We have the following equality
\begin{equation}\label{equation:intersection_formula_P_O}
\overline{P}.\overline{O}=-\frac{1}{2}\sum\limits_{a\in\mathbb{P}^{1}_{\overline{\mathbb{Q}}}(\overline{\mathbb{Q}})}\min\{0,v_{a}(x)\}.
\end{equation}
If the curve $\overline{P}$ intersects the same component as $\overline{O}$ in the fiber over $a\in B$, then we put $c_{a}(P,P)=0$. Otherwise, let $n=\min\{v_{a}(y),v_{a}(\Delta)/2\}$, where $\Delta$ is a discriminant of the equation (\ref{eq:general_equation_f_g}). Then
\begin{equation}\label{equality:formula_for_c_a}
c_{a}(P,P)=\frac{n(N-n)}{N}
\end{equation}
if the fiber above $a$ is of type $I_{N}$.
\end{lemma}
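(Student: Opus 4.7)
The plan is to decompose both quantities into contributions at each closed point $a\in\mathbb{P}^{1}_{\overline{\mathbb{Q}}}(\overline{\mathbb{Q}})$ and analyze the resulting local models separately. For formula (\ref{equation:intersection_formula_P_O}), the intersection $\overline{P}.\overline{O}$ is a sum of local intersection numbers $(\overline{P}.\overline{O})_{a}$. Embedding the globally minimal Weierstrass model (Lemma \ref{lemma:globally_minimal_model_f_g}) in the standard $\mathbb{P}^{2}$-bundle over $\mathbb{P}^{1}_{\overline{\mathbb{Q}}}$ realizes $\overline{O}$ as the constant section $[0:1:0]$, and $\overline{P}$ meets $\overline{O}$ at $a$ exactly when $(x(t),y(t))$ has a pole at $a$, i.e.\ when $v_{a}(x)<0$. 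Minimality and integrality of the model force $v_{a}(x)=-2m$ and $v_{a}(y)=-3m$ for some integer $m\ge 1$ in that case, and the local intersection multiplicity equals $m=-v_{a}(x)/2$. Summing over all $a$ (including $a=\infty$ after the change of coordinates $t\mapsto 1/s$ from Theorem \ref{theorem:globally_minimal_condition}) yields (\ref{equation:intersection_formula_P_O}).

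For (\ref{equality:formula_for_c_a}) I would invoke Shioda's explicit formula for the local contribution at a fiber of multiplicative type \cite[Thm.~8.6]{Shioda_Mordell_Weil} (see also \cite[\S 11.10]{Shioda_Schutt}): the components of an $I_{N}$ fiber form a cycle $\Theta_{0},\ldots,\Theta_{N-1}$ with $\Theta_{0}$ met by $\overline{O}$, and if $\overline{P}$ meets $\Theta_{i}$ with $1\le i\le N-1$ then $c_{a}(P,P)=i(N-i)/N$, while $c_{a}(P,P)=0$ when $i=0$. Since $i(N-i)/N$ is invariant under $i\mapsto N-i$, it suffices to prove that $n=\min\{v_{a}(y),v_{a}(\Delta)/2\}$ equals $\min\{i,N-i\}$. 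The singular point of the special fiber lies at $(x_{0},0)$ for an explicit $x_{0}\in\{0,f^{2}(a),g^{2}(a)\}$ from Lemma \ref{lemma:bad_fibres_types_f_g_curve}; shifting $x\mapsto x-x_{0}$ and completing the square in $x$, the total space acquires an $A_{N-1}$-singularity with formal equation $UV=T^{N}$, where $T$ is a uniformizer at $a$ and $U=y-\alpha(x-x_{0})$, $V=y+\alpha(x-x_{0})$ for a suitable local unit $\alpha$. The minimal resolution labels $\Theta_{i}$ by $v_{a}(U)=i$, $v_{a}(V)=N-i$, so a section through the node hits $\Theta_{i}$ with $i=v_{a}(U)$.

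Since $y=(U+V)/2$, one has $v_{a}(y)\ge\min\{v_{a}(U),v_{a}(V)\}=\min\{i,N-i\}$, with equality whenever the two valuations of $U$ and $V$ differ; equality fails only in the boundary case $i=N-i=N/2$, where cancellation may push $v_{a}(y)$ beyond $N/2$, but then the truncation by $v_{a}(\Delta)/2$ in the definition of $n$ restores the correct value. Combining the cases gives $n=\min\{i,N-i\}$, and substituting into Shioda's formula yields (\ref{equality:formula_for_c_a}). The main technical difficulty is the explicit identification of the component index from the valuations $v_{a}(x)$ and $v_{a}(y)$: one must carefully resolve the $A_{N-1}$-singularity, match the resulting components with Shioda's labeling, and treat the cancellation at the antipodal component. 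I would manage this by case-by-case analysis along the three fiber loci enumerated in Lemma \ref{lemma:bad_fibres_types_f_g_curve}, where both $x_{0}$ and the local coefficients of the Weierstrass model are entirely explicit.
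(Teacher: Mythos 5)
Your argument is correct, but it takes a genuinely different route from the paper: the paper's proof of this lemma is essentially a two-line citation, deriving \eqref{equation:intersection_formula_P_O} from \cite[III, \S 9]{Silverman_book} and \eqref{equality:formula_for_c_a} from the proof of \cite[Theorem 8.6]{Shioda_Mordell_Weil} together with \cite[\S 1]{Cremona_heights} and \cite[\S 5, (28)]{Silverman_heights}, whereas you re-derive both facts from first principles. Your derivation of \eqref{equation:intersection_formula_P_O} is the standard one and is sound: global minimality (Lemma \ref{lemma:globally_minimal_model_f_g}) identifies the Kodaira--N\'eron model with the resolution of the Weierstrass model away from the zero section, a pole of $x$ at $a$ forces $v_{a}(x)=-2m$, $v_{a}(y)=-3m$ by the Weierstrass relation, and the local intersection number is $v_{a}(-x/y)=m$. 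For \eqref{equality:formula_for_c_a} your reduction to the $A_{N-1}$-singularity $UV=T^{N}$ and the identification of the component index with $v_{a}(U)$ is exactly the content of the cited references; the one point worth making explicit is that for the specific equation \eqref{eq:general_equation_f_g} one has $a_{1}=a_{3}=0$ and every node sits at $y=0$, so no $y$-shift is needed and $v_{a}(y)$ itself (rather than $v_{a}(y+(a_{1}x+a_{3})/2)$, as in the general formulas of Cremona and Silverman) is the correct quantity. Your observation that the truncation by $v_{a}(\Delta)/2=N/2$ is precisely what repairs the possible cancellation in $y=(U+V)/2$ at the antipodal component $i=N/2$ is the key subtlety, and you handle it correctly. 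The trade-off is the usual one: the paper's citation is short and rests on established results, while your version is self-contained and exposes why the formula for $n$ has the shape it does; to make it fully rigorous you would still need to carry out the Hensel-type change of coordinates producing the unit $\alpha$ and the matching of resolution components with Shioda's labeling, which is routine but not free.
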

\begin{proof}
Let $P$ be a point in $E(\overline{\mathbb{Q}}(t))$. The formula (\ref{equation:intersection_formula_P_O}) follows from \cite[III, \S 9]{Silverman_book}. Equality (\ref{equality:formula_for_c_a}) comes from the proof of \cite[Theorem 8.6]{Shioda_Mordell_Weil}, \cite[\S 1]{Cremona_heights} and \cite[\S 5, (28)]{Silverman_heights}.
\end{proof}

\begin{lemma}\label{lemma:rank_2_subgroup_E}
Let $f,g\in\overline{\mathbb{Q}}[t]$ be two coprime polynomials and assume there exists another polynomial $h\in\overline{\mathbb{Q}}[t]$ such that $f^2+g^2=h^2$. Moreover, let $\deg g\leq \deg f$ and $\deg f>0$. For the elliptic curve $E$ over $\overline{\mathbb{Q}}(t)$ determined by the equation (\ref{eq:general_equation_f_g}) the following points
\begin{align*}
Q_{1}=&(-g^2,\sqrt{-2}g^2 h),\\
Q_{2}=&(h^2,fgh)
\end{align*}
are of infinite order and $\langle Q_{1},Q_{1}\rangle = \deg f$, $\langle Q_{2},Q_{2}\rangle=2\deg f$. Group generated by the points $Q_{1},Q_{2}$ has rank 2.
\end{lemma}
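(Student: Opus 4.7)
The plan is to verify that $Q_1$ and $Q_2$ lie on $E$, compute the self-heights $\langle Q_i,Q_i\rangle$ via Shioda's formula (\ref{equation:height_formula}), and conclude linear independence from positive-definiteness of the height pairing. Membership follows from direct substitution into (\ref{eq:general_equation_f_g}) using $h^2=f^2+g^2$: for $Q_1$ the cubic evaluates to $(-g^2)(-h^2)(-2g^2)=-2g^4h^2=y(Q_1)^2$, and for $Q_2$ it evaluates to $h^2\cdot g^2\cdot f^2=(fgh)^2=y(Q_2)^2$.

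Next I compute $\overline{Q_i}.\overline{O}$ via (\ref{equation:intersection_formula_P_O}). Both $x(Q_1)=-g^2$ and $x(Q_2)=h^2$ are polynomials in $t$ of degree at most $2n$, where $n=\deg f=\chi(S)$; after the change of coordinates $(x,y)\mapsto(x/s^{2n},y/s^{3n})$ at infinity prescribed by Lemma \ref{lemma:globally_minimal_model_f_g}, the new $x$-coordinates acquire non-negative $s$-valuation. Hence $v_a(x(Q_i))\ge 0$ for every $a\in\mathbb{P}^1_{\overline{\mathbb{Q}}}(\overline{\mathbb{Q}})$ in the globally minimal model, and therefore $\overline{Q_i}.\overline{O}=0$ for $i=1,2$.

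I then determine the local corrections by Lemma \ref{lemma:height_correction_for_I_N}, inspecting the reduction of each section at every bad fibre classified in Lemma \ref{lemma:bad_fibres_types_f_g_curve}. Using the identities $h(a)^2=g(a)^2$ at a root of $f$, $h(a)^2=f(a)^2$ at a root of $g$, and $h(a)^2=2f(a)^2=2g(a)^2$ at a root of $f^2-g^2$, one checks that $Q_2$ reduces to a smooth point of the Weierstrass fibre at every bad place, including $\infty$; consequently all $c_v(Q_2,Q_2)$ vanish and $\langle Q_2,Q_2\rangle=2n$. The section $Q_1$ also lies in the smooth locus of the fibres above roots of $f$ and of $f^2-g^2$, but at a root $a$ of $g$ of multiplicity $e$ it reduces to the node $(0,0)$ with $v_a(y(Q_1))=2e=v_a(\Delta)/2$, so (\ref{equality:formula_for_c_a}) on the $I_{4e}$ fibre gives $c_a(Q_1,Q_1)=e$; summing over roots of $g$ contributes $\deg g$, and a parallel calculation on the $I_{4(n-\deg g)}$ fibre at $\infty$ contributes $n-\deg g$. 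Altogether $\sum_v c_v(Q_1,Q_1)=n$, and therefore $\langle Q_1,Q_1\rangle=2n-n=n$.

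Both self-heights are strictly positive, so neither $Q_i$ is torsion. To conclude that the sublattice spanned by $Q_1$ and $Q_2$ has rank $2$, I will compute the cross term $\langle Q_1,Q_2\rangle$ from (\ref{equation:height_pairing_formula}) and verify the Gram inequality $\langle Q_1,Q_2\rangle^2<\langle Q_1,Q_1\rangle\langle Q_2,Q_2\rangle=2n^2$; positive-definiteness of the height pairing on $E(\overline{\mathbb{Q}}(t))/\mathrm{tors}$ then yields linear independence. The main obstacle in the whole argument is this last step: computing $\overline{Q_1}.\overline{Q_2}$ requires identifying the points of coincidence of the two sections, which arise from the roots of $f-\sqrt{-2}\,g$, and one must also track the local contributions $c_v(Q_1,Q_2)$ on the fibres at $\infty$ and over the roots of $g$, where both sections meet the same non-identity component.
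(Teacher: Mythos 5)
Your computation of the two self-heights follows the paper's own route: $\overline{Q_i}.\overline{O}=0$ from the valuations of the $x$-coordinates, the correction $v_a(g)$ at each root $a$ of $g$ for $Q_1$, the correction $\deg f-\deg g$ at infinity, and the vanishing of all corrections for $Q_2$; this part is sound and yields $\langle Q_1,Q_1\rangle=\deg f$ and $\langle Q_2,Q_2\rangle=2\deg f$ exactly as in the paper.

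The genuine gap is the last step. You announce that you \emph{will} compute $\langle Q_1,Q_2\rangle$ by locating the coincidences of the two sections and the local terms $c_v(Q_1,Q_2)$, and you correctly identify this as the main obstacle --- but you never carry it out, so linear independence (the ``rank $2$'' claim) is not established. The paper shows that this computation is entirely unnecessary: the Gram determinant is
\[
\det\left(\begin{array}{cc}\langle Q_1,Q_1\rangle & \langle Q_1,Q_2\rangle\\ \langle Q_2,Q_1\rangle & \langle Q_2,Q_2\rangle\end{array}\right)=2(\deg f)^2-\langle Q_1,Q_2\rangle^2,
\]
and since the height pairing takes values in $\mathbb{Q}$ (immediate from formula (\ref{equation:height_pairing_formula}), where the intersection numbers are integers and the $c_v$ are rational), vanishing of this determinant would force $\langle Q_1,Q_2\rangle=\pm\sqrt{2}\,\deg f\notin\mathbb{Q}$, impossible for $\deg f>0$. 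Hence the Gram matrix is nonsingular and $Q_1,Q_2$ are independent without any knowledge of the cross term. You should either adopt this irrationality argument or actually complete the intersection computation you sketch; note also that the coincidences of the sections come from the roots of both $f-\sqrt{-2}\,g$ and $f+\sqrt{-2}\,g$ (the condition $x(Q_1)=x(Q_2)$ reads $f^2=-2g^2$), so the computation you defer is even more delicate than you indicate.
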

\begin{proof}
Let $\mathcal{E}=(S,\mathbb{P}^{1}_{\overline{\mathbb{Q}}},\pi)$ be the Kodaira-N\'{e}ron model of $E$. Euler characteristic $\chi(S)$ equals $\deg f$ by Lemma \ref{lemma:bad_fibres_types_f_g_curve}. In the $s$-coordinate chart the point $Q_{1}$ looks as follows
\begin{equation}\label{equation:model_Q_1_at_infinity}
Q_{1}=(-g(1/s)^2 s^{2\deg f}, \sqrt{-2}g(1/s)^{2}h(1/s)s^{3\deg f} ).
\end{equation}
For the valuation at infinity $v_{\infty}=v_{s}$ we get $v_{s}(x(Q_{1}))=2\deg f-2\deg g\geq 0$. For the valuation $v_{a}$ at a finite place $a\in\overline{\mathbb{Q}}$ we consider $Q_{1}$ as a point with coordinates expressed with coordinate $t$. Observe that $v_{a}(x(Q_{1}))=v_{a}(-g^2)\geq 0$. Equality (\ref{equation:intersection_formula_P_O}) applied to $Q_{1}$ implies that
\[\overline{Q_{1}}.\overline{O}=0.\]
Assume that $B$ is the set of points in $\mathbb{P}^{1}_{\overline{\mathbb{Q}}}(\overline{\mathbb{Q}})$ for which $\mathcal{E}$ has singular fibers. According to Lemma \ref{lemma:bad_fibres_types_f_g_curve} element $a$ lies in $B$ if and only if $(f\cdot g\cdot (f^2-g^2))(a)=0$ or $a=\infty$. The curve $\overline{Q_{1}}$ intersects the same component as $\overline{O}$ in the fiber above $a\in B\setminus\{\infty\}$ only when $g(a)\neq 0$. For $a\in B\setminus\{\infty\}$ such that $g(a)=0$ the point $Q_{1}$ reduces to $(0,0)$ and the curve $\overline{Q_{1}}$ intersects other component that the curve $\overline{O}$ in the fiber above $a$. Observe that $v_{a}(y(Q_{1}))=v_{a}(\sqrt{-2}g^2h)=\frac{1}{2}v_{a}(\Delta)$. Put $n=\min\{v_{a}(y(Q_{1})),v_{a}(\Delta)/2\}$, so $n=2v_{a}(g)$. The fiber above $a$ has Kodaira type $I_{4v_{a}(g)}$, so in the notation of Lemma \ref{lemma:height_correction_for_I_N} there are $N=4v_{a}(g)$ components. 
Then equation (\ref{equality:formula_for_c_a}) gives 
\[c_{a}(P,P)=v_{a}(g).\]
If $a=\infty$ lies in $B$ we use equation (\ref{equation:model_Q_1_at_infinity}). In that case $\overline{Q_{1}}$ does not intersect the same component of the fiber $\pi^{-1}(\infty)$ as $\overline{O}$ exactly when $2\deg f>2 \deg g$ and $3\deg f >2\deg g+\deg h$. But from $\deg f>\deg g$ we get $\deg h=\deg f$ because of $f^2+g^2=h^2$. The fiber above $\infty$ has $N=12\deg f-\deg\Delta$ components. But $N$ equals $4(\deg f-\deg g)$. Then from the definition of $n$ we get
\[n=\min\{2(\deg f-\deg g),2(\deg f-\deg g)\}=2(\deg f-\deg g).\]
We apply again the formula (\ref{equality:formula_for_c_a}) to obtain $c_{a}(P,P)=\deg f-\deg g$. Finally, the height of $Q_{1}$ can be computed
\[\langle Q_{1},Q_{1}\rangle = 2\deg f -\deg g-(\deg f-\deg g)=\deg f.\]
In order to compute the height of $Q_{2}$ we express point $Q_{2}$ in the local coordinate system of $s$
\[Q_{2}=(h(1/s)^2 s^{2\deg f}, f(1/s)g(1/s)h(1/s)s^{3\deg f} ).\]
Similar reasoning to that performed for $Q_{1}$ shows that $\overline{Q_{2}}.\overline{O}=0$. The polynomials $h^2, fgh$ and $\Delta$ are pairwise coprime. So for $a\in B\setminus\{\infty\}$ the curve $\overline{Q_{2}}$ intersects the same component as $\overline{O}$ in the fiber above $a$. When $a=\infty$, then $\overline{Q_{2}}$ does not intersect the same component as $\overline{O}$ exactly when $2\deg f>2\deg h$ and $2\deg f>\deg g+\deg h$. The first inequality implies that $\deg f=\deg g$ by the definition of $h$. Moreover, if $f^2=a_{m}t^{m}+\ldots$ and $g^2=b_{m}t^{m}+\ldots$, then $a_{m}=-b_{m}\neq 0$. Hence $f^2-g^2=2a_{m}t^{m}+\ldots$ and $\deg(f^2-g^2)=\deg(f^2)=2\deg f$. The fiber above $a$ has
\[N=12\deg f-\deg\Delta = 4\deg f-2\deg(f^2-g^2)\]
components. This means that if $\overline{Q_{2}}$ would intersect a different component than $\overline{O}$ in the fiber above $a$, then $N$ would equal $0$, which contradicts the fact that each fiber has at least $1$ component. Hence $c_{\infty}(Q_{2},Q_{2})=0$ and by the height formula
\[\langle Q_{2},Q_{2}\rangle = 2\deg f.\]
Since $\deg f>0$, the points $Q_{1}$ and $Q_{2}$ are of infinite order, because their heights are positive. To prove that they are linearly independent it suffices to show that the Gram matrix with respect to $\langle\cdot,\cdot\rangle$ is nonzero. We check that
\begin{equation}
\det\left(\begin{array}{cc}
\langle Q_{1},Q_{1}\rangle & \langle Q_{1},Q_{2}\rangle\\
\langle Q_{2},Q_{1}\rangle & \langle Q_{2},Q_{2}\rangle\\
\end{array}\right)=2(\deg f)^2-\langle Q_{1},Q_{2}\rangle^2.
\end{equation}
The equality $2(\deg f)^2-\langle Q_{1},Q_{2}\rangle^2=0$  is impossible because $\langle Q_{1},Q_{2}\rangle$ is a rational number and $\deg f>0$. This finishes the proof of the theorem.
\end{proof}

\begin{corollary}\label{corollary:rank_2_for_f_g_curves}
Assumptions and notation as in Lemma \ref{lemma:rank_2_subgroup_E}. The points
\begin{align*}
P_{1}=&(-(1+\sqrt{2})g(g-h),\sqrt{-1}(1+\sqrt{2})g(g-h)(\sqrt{2}g-h)),\\
P_{2}=&((f-h)(g-h),(f+g)(f-h)(g-h))
\end{align*}
are of infinite order and linearly independent in $E(\overline{\mathbb{Q}}(t))$. The Gram matrix with respect to $\langle , \rangle$ has the form
\begin{equation}\label{equation:Gram_matrix_P1_P2}
\left(\begin{array}{cc}
\langle P_{1},P_{1}\rangle & \langle P_{1},P_{2}\rangle\\
\langle P_{2},P_{1}\rangle & \langle P_{2},P_{2}\rangle\\
\end{array}\right)=\left(\begin{array}{cc}
\frac{1}{4}\deg f & 0\\
0 & \frac{1}{2}\deg f\\
\end{array}\right)
\end{equation}
and 
\begin{align}
Q_{1}=&-2P_{1}\label{align:Q_1_equality},\\
Q_{2}=&-2P_{2}\label{align:Q_2_equality}.
\end{align}
\end{corollary}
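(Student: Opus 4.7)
The plan is to reduce everything to the previous Lemma \ref{lemma:rank_2_subgroup_E} by establishing the identities $2P_i=-Q_i$ for $i=1,2$, and then to invoke bilinearity of $\langle\cdot,\cdot\rangle_E$. First I would check that $P_1,P_2$ actually lie on $E$ by substituting the stated coordinates into (\ref{eq:general_equation_f_g}) and reducing via $f^2+g^2=h^2$. Then, using the duplication formula for the short Weierstrass form associated with $a_2=-(f^2+g^2)$ and $a_4=f^2g^2$, namely
\[
x(2P)=\left(\frac{3x(P)^2+2a_2 x(P)+a_4}{2y(P)}\right)^2-a_2-2x(P),
\]
I would compute $x(2P_1)$ and $x(2P_2)$; after clearing $h^2=f^2+g^2$ the expressions should collapse to $-g^2=x(Q_1)$ and $h^2=x(Q_2)$, respectively, and a sign check on $y(2P_i)$ pins down $2P_i=-Q_i$.

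With these identities in hand, bilinearity gives $\langle Q_i,Q_j\rangle = 4\langle P_i,P_j\rangle$ for all $i,j$. The diagonal entries of (\ref{equation:Gram_matrix_P1_P2}) then follow immediately from $\langle Q_1,Q_1\rangle=\deg f$ and $\langle Q_2,Q_2\rangle=2\deg f$ supplied by Lemma \ref{lemma:rank_2_subgroup_E}. For the off-diagonal entry it suffices to show $\langle Q_1,Q_2\rangle=0$ (equivalently $\langle P_1,P_2\rangle=0$), which I would do by a direct application of (\ref{equation:height_pairing_formula}): compute $\overline{Q_i}.\overline{O}$ from (\ref{equation:intersection_formula_P_O}) as in the proof of Lemma \ref{lemma:rank_2_subgroup_E}, compute $\overline{Q_1}.\overline{Q_2}$ from the resultant-type formula $\overline{Q_1}.\overline{Q_2}=-\tfrac{1}{2}\sum_{a}\min\{0,v_a(x(Q_1)-x(Q_2))\}$ (after checking both sections are disjoint from $\overline{O}$), and evaluate the local terms $c_v(Q_1,Q_2)$ fiber by fiber using the classification in Lemma \ref{lemma:bad_fibres_types_f_g_curve}. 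The only bad fibers where both $\overline{Q_1}$ and $\overline{Q_2}$ meet non-identity components sit above the roots of $g$ (type $I_{4e}$) and above $\infty$ (when $\deg f=\deg g$); for the $I_N$-case the mixed correction is read off from the standard table for $I_N$-fibers, and the relation $f^2+g^2=h^2$ forces the sum $\chi(S)+\overline{Q_1}.\overline{O}+\overline{Q_2}.\overline{O}-\overline{Q_1}.\overline{Q_2}-\sum_v c_v(Q_1,Q_2)$ to vanish.

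Finally, once the Gram matrix in (\ref{equation:Gram_matrix_P1_P2}) is established, it has determinant $\tfrac{1}{8}(\deg f)^2>0$ (since $\deg f>0$), so $P_1,P_2$ have positive self-height, are therefore of infinite order by positive-definiteness of the height pairing, and are $\mathbb{Z}$-linearly independent.

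The main obstacle is the careful bookkeeping for the mixed local correction terms $c_v(Q_1,Q_2)$ (or $c_v(P_1,P_2)$) at the $I_{4v_a(g)}$-fibers and at $\infty$: one must identify the component indices of $\overline{Q_1}$ and $\overline{Q_2}$ in each bad fiber (via the reductions of the $x$- and $y$-coordinates together with $v_a(y)$ versus $v_a(\Delta)/2$) and then verify that the signed contributions cancel. Everything else, including the duplication check and the linear-independence conclusion, is routine once this step is executed.
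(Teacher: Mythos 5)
Your reduction to Lemma \ref{lemma:rank_2_subgroup_E} via the identities $Q_i=-2P_i$ and bilinearity is exactly what the paper does for the diagonal entries, the infinite-order claim, and (essentially) linear independence; note that the paper obtains independence even before knowing $\langle P_1,P_2\rangle=0$, since the determinant $\frac{1}{8}(\deg f)^2-\langle P_1,P_2\rangle^2$ cannot vanish when $\langle P_1,P_2\rangle$ is rational. Where you genuinely diverge is the off-diagonal entry. You propose to evaluate $\langle Q_1,Q_2\rangle$ directly from \eqref{equation:height_pairing_formula}, which requires the section--section intersection number $\overline{Q_1}.\overline{Q_2}$ and the mixed local terms $c_v(Q_1,Q_2)$ --- neither of which the paper's Lemma \ref{lemma:height_correction_for_I_N} supplies. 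The paper instead polarizes: it writes $\langle Q_1+Q_2,Q_1+Q_2\rangle=3\deg f+2\langle Q_1,Q_2\rangle$, computes the coordinates of $Q:=Q_1+Q_2$ explicitly via the addition law, and evaluates the self-height $\langle Q,Q\rangle$ using only the two formulas already established (\eqref{equation:intersection_formula_P_O} for $\overline{Q}.\overline{O}$ and \eqref{equality:formula_for_c_a} for $c_a(Q,Q)$), showing $\overline{Q}.\overline{O}=\deg f$ and $\sum_a c_a(Q,Q)=0$; the latter requires a leading-coefficient analysis at $\infty$ that the paper settles with a Gr\"{o}bner-basis computation. The polarization route buys freedom from ever needing mixed correction terms or intersections of two non-zero sections, at the price of manipulating explicit addition formulas.

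Two concrete problems with your version of the key step. First, the formula you quote for $\overline{Q_1}.\overline{Q_2}$ is not correct: the expression $-\frac{1}{2}\sum_a\min\{0,v_a(x)\}$ computes intersection with the \emph{zero} section, which sits at infinity of the Weierstrass model, so poles of $x$ are what matter there; two affine sections meet in the fiber over $a$ where their coordinates \emph{agree}, i.e.\ where $v_a(x(Q_1)-x(Q_2))>0$ and the $y$-coordinates also agree, with local multiplicity governed by $\min\{v_a(x(Q_1)-x(Q_2)),v_a(y(Q_1)-y(Q_2))\}$, not by poles of the difference. Second, the decisive assertion --- that $f^2+g^2=h^2$ forces $\chi(S)+\overline{Q_1}.\overline{O}+\overline{Q_2}.\overline{O}-\overline{Q_1}.\overline{Q_2}-\sum_v c_v(Q_1,Q_2)$ to vanish --- is precisely the hard content of the corollary and is deferred as ``bookkeeping''; identifying which fiber components the two sections hit and verifying the cancellation (including the degenerate leading-coefficient cases when $\deg f=\deg g$) is where all the work lies, so the proposal as written has a gap exactly at the point that carries the proof.
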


\begin{proof}
The equalities (\ref{align:Q_1_equality}) and (\ref{align:Q_2_equality}) follow from $f^2+g^2=h^2$ and a simple direct computation. Lemma \ref{lemma:rank_2_subgroup_E} implies that $\langle Q_{1},Q_{1}\rangle = \deg f$. The form $\langle\cdot,\cdot\rangle$ is bilinear hence $\langle P_{1},P_{1}\rangle =\frac{1}{4}\deg f$. In the same way we obtain $\langle P_{2},P_{2}\rangle=\frac{1}{2}\deg f$. By the assumption $\deg f>0$ the heights of $P_{1}$ and $P_{2}$ are positive so the points are of infinite order. The determinant of Gram matrix (\ref{equation:Gram_matrix_P1_P2}) equals $\frac{1}{8}(\deg f)^2-\langle P_{1},P_{2}\rangle^2$ and cannot be zero since the number $\langle P_{1},P_{2}\rangle$ is rational. Therefore, the points $P_{1}$ and $P_{2}$ are linearly independent. We shall prove now that $\langle P_{1},P_{2}\rangle=0$. Using the fact that (\ref{align:Q_1_equality}) and (\ref{align:Q_2_equality}) hold, we have to prove that $\langle Q_{1},Q_{2}\rangle=0$. Again, by bilinearity of the form $\langle\cdot,\cdot\rangle$ we get
\[\langle Q_{1}+Q_{2},Q_{1}+Q_{2}\rangle =\langle Q_{1},Q_{1}\rangle +\langle Q_{2},Q_{2}\rangle +2\langle Q_{1},Q_{2}\rangle= 3\deg f+2\langle Q_{1},Q_{2}\rangle. \]
So equation $\langle Q_{1},Q_{2}\rangle=0$ is true if and only if $\langle Q_{1}+Q_{2},Q_{1}+Q_{2}\rangle=3\deg f$ holds. We will prove this last equality. In explicit terms, let $Q:=Q_{1}+Q_{2}=(x,y)$ where
\begin{equation}
x=\frac{-f^4-2 \sqrt{-2} f^3 g+3 f^2 g^2+g^4}{\left(f+\sqrt{-2}g\right)^2},
\end{equation}
\begin{equation}
y=\frac{g h \left(-f^4-\sqrt{-2} f^3 g-2 f^2 g^2-4 \sqrt{-2} f g^3+3 g^4\right)}{\left(f+\sqrt{-2} g\right)^3}.
\end{equation}
To simplify the notation we label three polynomials
\begin{align*}
F_{1}:= & -f^4-2 \sqrt{-2} f^3 g+3 f^2 g^2+g^4,\\
F_{2}:= & g h \left(-f^4-\sqrt{-2} f^3 g-2 f^2 g^2-4 \sqrt{-2} f g^3+3 g^4\right),\\
H:= & f+\sqrt{-2}g.
\end{align*}
Let $\mathcal{E}=(S,\mathbb{P}^{1}_{\overline{\mathbb{Q}}},\pi)$ be the Kodaira-N\'{e}ron model for $E$. The set $B\subset\mathbb{P}^{1}_{\overline{\mathbb{Q}}}(\overline{\mathbb{Q}})$ is such that $a\in B$ if and only if $\pi^{-1}(a)$ is not smooth. In other words $a\in B$ if and only if $(f\cdot g\cdot (f^2-g^2))(a)=0$ or $a=\infty$. Let us first compute the intersection number $\overline{Q}.\overline{O}$. The function $x$ has a pole at $a$ when $a\in\overline{\mathbb{Q}}$. But if $H(a)=0$, then $F_{1}(a)\neq 0$, so $v_{a}(x)=-2\deg H$. When $a=\infty$, we express $Q$ in the coordinate system of $s=1/t$
\[Q=\left(\frac{F_{1}(1/s)}{(H(1/s))^2} s^{2\deg f}, \frac{F_{2}(1/s)}{(H(1/s))^3} s^{3\deg f} \right).\]
When $\deg f> \deg g$, then $\deg F_{1}=4\deg f$ and $\deg H=\deg f$. In consequence $v_{s}(x)=0$.

\medskip\noindent
When $\deg f= \deg g$ we split the computations into two separate cases.

\medskip\noindent
$\mathbf{1^{\circ}}$ Let $\deg H<\deg f$. If $f=a_{m}t^m+\ldots$ and $g=b_{m}t^{m}+\ldots$ with $a_{m}\neq 0, b_{m}\neq 0$, then $a_{m}+\sqrt{-2}b_{m}=0$. The leading coefficient of the polynomial $F$ is
\[A:=-a_{m}^4-2 \sqrt{-2} a_{m}^3 b_{m}+3 a_{m}^2 b_{m}^2+b_{m}^4.\]
Because of the equality $a_{m}=-\sqrt{-2}b_{m}$, the coefficient $A$ is nonzero, so $\deg F_{1}=4\deg f$. So the valuation $v_{s}(x)$ equals $-(2\deg f-2\deg H)$.

\medskip\noindent
$\mathbf{2^{\circ}}$ Let $\deg H=\deg f$. In this case we have $\deg F_{1}\leq 4\deg f$ and $v_{s}(x)=4\deg f-\deg F_{1}\geq 0$. Application of formula (\ref{equation:intersection_formula_P_O}) gives
\[\overline{Q}.\overline{O}=\deg f.\]
It remains to prove that $\sum\limits_{a\in B}c_{a}(Q,Q)=0$. Observe first that for $a\in B\setminus\{\infty\}$ value $x(a)$ is finite and nonzero, so by definition $c_{a}(Q,Q)=0$. If $a=\infty$, then on the basis of previous computations we get
\begin{itemize}
\item[(i)] if $\deg f>\deg g$, then $v_{\infty}(x)=v_{s}(x)=0$, so $c_{\infty}(Q,Q)=0$,
\item[(ii)] if $\deg f=\deg g$ and $\deg H<\deg f$, then $v_{\infty}(x)<0$, so $c_{\infty}(Q,Q)=0$,
\item[(iii)] if $\deg f=\deg g$ and $\deg H=\deg f$, then $v_{\infty}(x)\geq 0$ and $v_{\infty}(y)\geq 0$.
\end{itemize}
We continue the argument from point (iii). If the curve $\overline{Q}$ would not intersect the same component as $\overline{O}$ in the fiber above $a$, then we would have $v_{a}(x)>0$ and $v_{a}(y)>0$. But this is possible only if $\deg F_{1} < 4\deg f$ and $\deg F_{2}<6\deg f$. First inequality means that $A=0$ and the second implies
\[B:=b_{m}\sqrt{a_{m}^{2}+b_{m}^{2}}\left(-a_{m}^4-\sqrt{-2} a_{m}^3 b_{m}-2 a_{m}^2 b_{m}^2-4 \sqrt{-2} a_{m} b_{m}^3+3 b_{m}^4\right)=0.\]
We shall prove now that $A=0$ and $B^2=0$ cannot both occur. If we treat $a_{m}$ and $b_{m}$ as variables $a$ and $b$ respectively, then we are looking for a solution in variables $a,b,s$ of the following system
\begin{align*}
b^2 (a^2 + b^2) (-a^4 - 2 a^2 b^2 + 3 b^4 - a^3 b s - 4 a b^3 s)^2 &=0\\
-a^4 + 3 a^2 b^2 + b^4 - 2 a^3 b s &=0\\
2 + s^2&=0
\end{align*}
If $b=0$, then $a=0$, but this means $a_{m}=b_{m}=0$, which contradicts the assumption that $a_{m}$ and $b_{m}$ are the leading coefficients of $f$ and $g$. If $a^2=-b^2$, then the second equation of the system reduces to $-b^3 (3 b - 2 a s)=0$ and again we obtain $a=b=0$, hence a contradiction. It remains to consider the case when $-a^4 - 2 a^2 b^2 + 3 b^4 - a^3 b s - 4 a b^3 s=0$. Running MAGMA package \cite{Magma} we can check that the ideal
\[I=(-a^4 - 2 a^2 b^2 + 3 b^4 - a^3 b s - 4 a b^3 s,-a^4 + 3 a^2 b^2 + b^4 - 2 a^3 b s,s^2+2)\subset\mathbb{Q}[a,b,s]\]
can be expressed in a different way in terms of a Gr\"{o}bner basis
\begin{equation*}
\begin{split}
I=(a^4 + 7a^2b^2 + 8ab^3s - 5b^4,a^3 b + \frac{5}{2} a^2  b^2 s - 4 a b^3 - b^4 s,a^2 b^3 + \\
 \frac{2}{3} a b^4 s,a b^5 + \frac{9}{14} b^6 s,b^7,s^2 + 2)\subset \mathbb{Q}[a,b,s].
\end{split}
\end{equation*}
This implies that the system
\begin{align*}
-a^4 - 2 a^2 b^2 + 3 b^4 - a^3 b s - 4 a b^3 s &=0\\
-a^4 + 3 a^2 b^2 + b^4 - 2 a^3 b s &=0\\
2 + s^2&=0
\end{align*}
is equivalent to $a=b=s^2+2=0$. By virtue of (\ref{equation:height_formula}) we finally get the desired equality
\[\langle Q,Q\rangle =3\deg f.\]
\end{proof}

The following proposition gives a characterization of triples $(f,g,h)$ of coprime polynomials in $\overline{\mathbb{Q}}[t]$ such that $f^2+g^2=h^2$. It will be used in the proof of Theorem \ref{theorem:Main_theorem_geometric_MW}.
\begin{proposition}\label{proposition:h_1_h_2_generators}
Let $f,g,h\in\overline{\mathbb{Q}}[t]$ be the polynomials such that $f^2+g^2=h^2$. If $f,g$ are coprime, then there exist two polynomials $h_{1},h_{2}\in\overline{\mathbb{Q}}[t]$ that are coprime and
\begin{align}
f &= \frac{1}{2}(h_{1}^{2}+h_{2}^{2}),\label{align:wielomian_f}\\
g &= \frac{1}{2i}(h_{1}^{2}-h_{2}^{2}), \quad i=\sqrt{-1},\label{align:wielomian_g}\\
h &= h_{1}h_{2}.\label{align:wielomian_h}
\end{align}
The other way around, let $h_{1},h_{2}\in\overline{\mathbb{Q}}[t]$ be two coprime polynomials. We construct polynomials $f,g$ and $h$ determined by the formulas (\ref{align:wielomian_f}), (\ref{align:wielomian_g}), (\ref{align:wielomian_h}). They satisfy the relation $f^2+g^2=h^2$ and $f,g$ are coprime.
\end{proposition}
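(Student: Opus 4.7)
The plan is to use unique factorization in $\overline{\mathbb{Q}}[t]$ after rewriting $f^{2}+g^{2}$ as a product of linear combinations with the imaginary unit $i=\sqrt{-1}$.

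For the forward direction, I would start from the factorization
\[
h^{2}=f^{2}+g^{2}=(f+ig)(f-ig)\qquad\text{in }\overline{\mathbb{Q}}[t].
\]
The first key step is to show that $f+ig$ and $f-ig$ are coprime: any common divisor must divide both their sum $2f$ and their difference $2ig$, and since we work in characteristic zero this forces a common divisor of $f$ and $g$, contradicting the hypothesis. Because $\overline{\mathbb{Q}}[t]$ is a UFD, a product of two coprime elements that equals a square must have each factor equal to a square up to a unit of $\overline{\mathbb{Q}}^{\times}$. Hence there exist $\widetilde{h}_{1},\widetilde{h}_{2}\in\overline{\mathbb{Q}}[t]$ and constants $\alpha_{1},\alpha_{2}\in\overline{\mathbb{Q}}^{\times}$ with $f+ig=\alpha_{1}\widetilde{h}_{1}^{\,2}$ and $f-ig=\alpha_{2}\widetilde{h}_{2}^{\,2}$. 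Since $\overline{\mathbb{Q}}$ is algebraically closed, we may absorb the scalars by setting $h_{1}:=\sqrt{\alpha_{1}}\,\widetilde{h}_{1}$ and $h_{2}:=\sqrt{\alpha_{2}}\,\widetilde{h}_{2}$, which yields $f+ig=h_{1}^{2}$ and $f-ig=h_{2}^{2}$. Solving this linear system for $f$ and $g$ gives precisely \eqref{align:wielomian_f} and \eqref{align:wielomian_g}. Comparing $h^{2}=(h_{1}h_{2})^{2}$, we obtain $h=\pm h_{1}h_{2}$; replacing $h_{2}$ by $-h_{2}$ if necessary (which does not alter $h_{2}^{2}$) produces \eqref{align:wielomian_h}. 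Coprimality of $h_{1}$ and $h_{2}$ follows because any common divisor of $h_{1}$ and $h_{2}$ divides both $h_{1}^{2}=f+ig$ and $h_{2}^{2}=f-ig$, hence divides both $f$ and $g$.

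For the converse, I would carry out a direct algebraic verification. From \eqref{align:wielomian_f} and \eqref{align:wielomian_g},
\[
f^{2}+g^{2}=\tfrac{1}{4}(h_{1}^{2}+h_{2}^{2})^{2}-\tfrac{1}{4}(h_{1}^{2}-h_{2}^{2})^{2}=h_{1}^{2}h_{2}^{2}=h^{2},
\]
which gives the Pythagorean relation. Coprimality of $f$ and $g$ is again automatic: any common divisor of $f$ and $g$ divides $f+ig=h_{1}^{2}$ and $f-ig=h_{2}^{2}$, hence divides $\gcd(h_{1}^{2},h_{2}^{2})$, which is a unit by the coprimality of $h_{1}$ and $h_{2}$.

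No step is a genuine obstacle. The only point requiring care is the passage through unit factors in the UFD, where one must use that every element of $\overline{\mathbb{Q}}^{\times}$ admits a square root so that the scalars $\alpha_{1},\alpha_{2}$ can be absorbed cleanly into $h_{1},h_{2}$; this is also what justifies normalizing the sign to obtain $h=h_{1}h_{2}$ rather than merely $h=\pm h_{1}h_{2}$.
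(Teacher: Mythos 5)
Your proposal is correct and follows essentially the same route as the paper: factor $f^2+g^2=(f+ig)(f-ig)=h^2$ and invoke unique factorization in $\overline{\mathbb{Q}}[t]$ to conclude each factor is a square. The paper states this in two sentences and leaves the rest to the reader; you have merely filled in the details it omits (coprimality of $f\pm ig$, absorption of the unit scalars, the sign of $h$, and the converse verification), all correctly.
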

\begin{proof}
Observe that $f^2+g^2=(f+\sqrt{-1}g)(f-\sqrt{-1}g)=h^2$. The ring $\overline{\mathbb{Q}}[t]$ is a unique factorization domain, so both factors on the left-hand side are squares, say $(f+\sqrt{-1}g)=h_{1}^{2}$ and $(f-\sqrt{-1}g)=h_{2}^{2}$. The rest of the proof follows.
\end{proof}

\begin{proof}[Proof of Theorem \ref{theorem:Main_theorem_geometric_MW}]
By Proposition \ref{proposition:h_1_h_2_generators} we have two polynomials $h_{1}$ and $h_{2}$ that satisfy equations \eqref{align:wielomian_f}, \eqref{align:wielomian_g} and \eqref{align:wielomian_h}. If the polynomial $g$ would be constant then this will imply that $h_{1}$ and $h_{2}$ are constant too, a contradiction. So in fact we have the bound $1\leq \deg g$. Similarly, the polynomial $h$ is non-constant and of degree at most $2$. It is also separable, because otherwise $\deg f=4$ or $f$ would have a common root with $g$.

\medskip\noindent
The polynomial $f^2-g^2$ equals $\frac{1}{2}(h_{1}^4+h_{2}^4)$. The degree $\deg h$ is at most $2$, so consider first the case $\deg h=1$, hence $h_{1}=a\in\overline{\mathbb{Q}}^{\times}$ and $h_{2}=c(t-d)$, $c\neq 0$. The expression $\frac{1}{2}(h_{1}^4+h_{2}^4)$ written as a polynomial of variable $t$ is of degre $3$ or $4$. In both cases it has nonzero discriminant, so it is a separable polynomial. When $\deg h=2$, then let $h_{1}=a(t-b)$ and $h_{2}=c(t-d)$, $a,c\neq 0$ and $b\neq d$. Again, the discriminant of $\frac{1}{2}(h_{1}^4+h_{2}^4)$ written as a polynomial of $t$ is nonzero, hence $f^2-g^2$ is separable. A similar argument proves that $g$ is separable when $\deg g=2$ and also that $f$ is separable.

\medskip\noindent
Let $\mathcal{E}=(S,\mathbb{P}^{1}_{\overline{\mathbb{Q}}},\pi)$ be the Kodaira-N\'{e}ron model of $E$. We denote by $N_{\infty}$ the number of components in $\pi^{-1}(\infty)$. Lemma \ref{lemma:bad_fibres_types_f_g_curve} implies that $N_{\infty}=16-4\deg g-2\deg(f^2-g^2)$. Let $r$ denote the rank of  $E(\overline{\mathbb{Q}}(t))$. Application of Shioda-Tate formula \cite[Corollary 5.3]{Shioda_Mordell_Weil} allows us to compute the rank $\rho(S)$ of the N\'{e}ron-Severi group associated with $S$
\begin{equation}\label{equation:Picard_number_bound_f_g_curve}
\rho(S)=8+\deg(f^2-g^2)+3\deg g+r+\max\{N_{\infty}-1,0\}.  
\end{equation}
From the assumption we have $\chi(S)=\deg f=2$ and by the Lefschetz (1,1)-classes theorem \cite[Prop. 3.3.2]{Huybrechts_Geometry} we get the bound $\rho(S)\leq 20$. Combining this with equality (\ref{equation:Picard_number_bound_f_g_curve}) we obtain the bound for the rank
\[r\leq 12-\max\{N_{\infty}-1,0\}-3\deg g-\deg(f^2-g^2).\]
If $\deg g=1$, then $\deg(f^2-g^2)=4$ and $N_{\infty}=4$ and $r\leq 2$. If $\deg g=2$, then we consider two cases. For $\deg(f^2-g^2)=4$ we have $N_{\infty}=0$ (the fiber above $\infty$ is not singular) and again $r\leq 2$. For $\deg(f^2-g^2)<4$ we have $N_{\infty}=8-2\deg(f^2-g^2)>0$, hence
\[r\leq 12-(7-2\deg(f^2-g^2))-6-\deg(f^2-g^2)\]
\[r+1\leq\deg(f^2-g^2)\]
We combine this with Corollary \ref{corollary:rank_2_for_f_g_curves} to get $\deg(f^2-g^2)=3$. This means that again $r\leq 2$ and application of Corollary \ref{corollary:rank_2_for_f_g_curves} finishes the proof.

\medskip\noindent
The structure of the torsion subgroup in $E(\overline{\mathbb{Q}}(t))$ has been established in Corollary \ref{corollary:torsion_structure_f_g_curves_with_assumptions}. Let us prove that $P_{1}$ and $P_{2}$ generate the free part of the group $E(\overline{\mathbb{Q}}(t))$. From now on let $K$ denote $\overline{\mathbb{Q}}(t)$. The pair $(E(K)/E(K)_{\textrm{tors}},\langle\cdot,\cdot\rangle_{E})$ is a positive definite lattice. Elliptic surface $\mathcal{E}$ admits only singular fibers of types $I_{2}$ and $I_{4}$ so by the formulas (\ref{equation:height_pairing_formula}) and (\ref{equality:formula_for_c_a}) we get $\langle P,Q\rangle_{E}\in\frac{1}{4}\mathbb{Z}$ for any $P,Q\in E(K)/E(K)_{\textrm{tors}}$. Corollary \ref{corollary:rank_2_for_f_g_curves} implies that $\langle P_{1},P_{1}\rangle_{E}=1/2$, $\langle P_{2},P_{2}\rangle_{E}=1$ and $\langle P_{1},P_{2}\rangle_{E}=\langle P_{2},P_{1}\rangle_{E}=0$. To keep the values of the pairing integral we define a lattice $(\Lambda,\langle\cdot,\cdot\rangle)$ such that $\Lambda=E(K)/E(K)_{\textrm{tors}}$ and $\langle\cdot,\cdot\rangle=4\langle\cdot,\cdot\rangle_{E}$. We present an argument similar to the proof of
\cite[Lemma 6.5]{Naskrecki_Acta}. Let $\Lambda'$ denote a sublattice in $\Lambda$ spanned by $P_{1}$ and $P_{2}$. Our goal is to prove that $\Lambda=\Lambda'$. For this, let $n=[\Lambda:\Lambda']$ be the index of $\Lambda'$ in $\Lambda$. With respect to $\langle\cdot,\cdot\rangle$ we have
\[\langle P_{i},P_{j}\rangle = \left\{\begin{array}{cc} 2i, & \textrm{if }i=j,\\ 0, & \textrm{if }i\neq j.   \end{array}\right.\]
Hence the discriminant $\Delta(\Lambda')$ equals $8$, so
\[8=\Delta(\Lambda')=n^{2}\Delta(\Lambda).\]
This means that $n^{2}\mid 8$ and therefore $n\mid 2$. Let $G=E(K)$. The subgroup generated by $P_{1},P_{2}$, $T_{1}$ and $T_{2}$ is denoted by $H$. The index $[G:H]$ is equal to $n$. By the stacked basis theorem for abelian groups there exists two elements $R_{1},R_{2}\in G$ such that $R_{1},R_{2},T_{1},T_{2}$ generate $G$ and $H$ is generated by $aR_{1}, bR_{2},T_{1},T_{2}$ where $a,b\in\mathbb{Z}$, $a\mid b$ and $ab=n$. By \cite[X, Proposition 1.4]{Silverman_arithmetic} there exists an injective homomorphism
\[\psi:E(K)/2E(K)\hookrightarrow K^{\times}/(K^{\times})^{2}\times K^{\times}/(K^{\times})^{2}\]
such that if $(x,y)$ is a point in $E(K)\setminus E(K)[2]$ the image $\psi(x,y)$ is as follows
\[\psi(x,y)=(x,x-f^2).\]
Let $\phi:G\rightarrow G/2G$ denote the map $\phi(x)=x+2G$ and $\eta=\psi\circ\phi$. Group $E(K)$ is of rank $2$ and by Corollary \ref{corollary:torsion_structure_f_g_curves_with_assumptions} torsion subgroup of $E(K)$ is isomorphic to $\mathbb{Z}/2\mathbb{Z}\oplus\mathbb{Z}/4\mathbb{Z}$, hence $G/2G\cong (\mathbb{Z}/2\mathbb{Z})^{4}$. The homomorphism $\eta$ is injective so $\eta(G)\cong (\mathbb{Z}/2\mathbb{Z})^{4}$. Suppose $n=2$, then $a=1$ and $b=2$ and it follows that $\eta(H)\cong (\mathbb{Z}/2\mathbb{Z})^{3}$. So if we prove that $\eta(H)\cong (\mathbb{Z}/2\mathbb{Z})^{4}$, then $n=1$ and the proof will be finished. Let $\zeta=e^{2\pi \sqrt{-1}/8}$ be a primitive root of unity of degree $8$. Let $h_{1}$ and $h_{2}$ be two polynomials in $\overline{\mathbb{Q}}[t]$ defined in Proposition \ref{proposition:h_1_h_2_generators}. By a simple computation and using the fact that coordinates of elements $\eta(P_{1}),\eta(P_{2})$ and $\eta(T_{2})$ are determined up to squares in $K$ we compute
\begin{align*}
\eta(P_{1})=&\left((h_1 - h_2)  (h_1 + h_2), (\zeta h_1 + h_2) (\zeta^{3} h_1 +h_2)\right),\\
\eta(P_{2})=&\left(1, (\zeta h_1 + h_2) (\zeta^{5} h_1 + h_2)\right),\\
\eta(T_{1})=&\left(1,(\zeta h_1 + h_2)(\zeta^3 h_1 + h_2) (\zeta^{5} h_1 + h_2)(\zeta^7 h_1 + h_2)\right),\\
\eta(T_{2})=&\left( (h_1 - h_2) (h_1 + h_2) (h_1^2 + h_2^2), (\zeta^{2}h_1  + h_2)(\zeta^{3} h_1  + h_2)(\zeta^{6}h_1  + h_2)(\zeta^{7}h_1  + h_2) \right).
\end{align*}
The first coordinate of $\eta(P_{1})$ is equal to $g$ up to squares, so by assumption on $g$ it cannot be equal to $1$ modulo $(K^{\times})^{2}$. The element $\eta(P_{1})$ has order $2$. We recall that $h_{1}=a(t-b)$, $h_{2}=c(t-d)$ for some $a,b,c,d\in\overline{\mathbb{Q}}$, $a,c\neq 0$ or $h_{1}=a\neq 0$ and $h_{2}=c(t-d)$. Below we assume that the former case holds, for the latter the computations are very similar. The polynomials $f$ and $g$ have no common roots, so $b\neq d$. The second coordinate $\eta(P_{2})_{2}$ of $\eta(P_{2})$ equals
\[(-\sqrt{-1}) a^2 b^2+c^2 d^2+t \left(-2 c^2 d+2 \sqrt{-1} a^2 b\right)+t^2 \left(c^2-\sqrt{-1} a^2\right).\]
We easily check that under our assumptions the polynomial has degree at least $1$. If it is of degree $1$ then it cannot be a square. Assume that it has degree $2$. Discriminant of this polynomial with respect to $t$ is $4 \sqrt{-1} a^2 c^2 (b-d)^2$, hence $\eta(P_{2})_{2}\not\equiv 1 (\textrm{mod }(K^{\times})^{2})$ and the order of $\eta(P_{2})$ is $2$. The first coordinate $\eta(T_{2})_{1}$ of the point $\eta(T_{2})$ is a polynomial
\[a^4 b^4-c^4 d^4+t \left(4 c^4 d^3-4 a^4 b^3\right)+t^2 \left(6 a^4 b^2-6 c^4 d^2\right)+t^3 \left(4 c^4 d-4 a^4 b\right)+t^4 \left(a^4-c^4\right).\]
If it would be of degree $3$ or $1$, we are done. It cannot be constant or of degree $2$ since $c\neq 0$ and $b\neq d$. Assume that it has degree four. Its discriminant $-256 a^{12} c^{12} (b-d)^{12}$ is nonzero, so $\eta(T_{2})_{1}\not\equiv 1 (\textrm{mod }(K^{\times})^{2})$ and $\eta(T_{2})$ is of order $2$. By the preceding arguments we already know that $\eta(P_{1})\neq\eta(P_{2})$ and $\eta(P_{2})\neq\eta(T_{2})$. Up to squares $h_{1}^{2}+h_{2}^{2}$ is the same as $f$, so $\eta(P_{1})\neq \eta(T_{2})$. This means that $\eta(P_{1})$ and $\eta(P_{2})$ span a group isomorphic to $(\mathbb{Z}/2\mathbb{Z})^{2}$. With the element $\eta(T_{2})$ they span a group isomorphic to $(\mathbb{Z}/2\mathbb{Z})^{3}$ if and only if $\eta(P_{1})\cdot\eta(P_{2})\neq\eta(T_{2})$. The last inequality is satisfied because $f$ is separable, so $h_{1}^{2}+h_{2}^{2}$ is not a square in $K$. Now we have to show that $\eta(T_{1})$ is not contained in the subgroup spanned by the elements $\eta(P_{1}),\eta(P_{2})$ and $\eta(T_{2})$. The element $\eta(T_{1})$ is of order $2$ if and only if the polynomial
\[a^4 b^4+c^4 d^4+t \left(-4 a^4 b^3-4 c^4 d^3\right)+t^2 \left(6 a^4 b^2+6 c^4 d^2\right)+t^3 \left(-4 a^4 b-4 c^4 d\right)+t^4 \left(a^4+c^4\right)\]
is not a square. It cannot be of degree less than $3$. We compute its discriminant under the assumption that it has degree $4$. It equals $256 a^{12} c^{12} (b-d)^{12}$, hence the polynomial is separable under our assumptions, hence $\eta(T_{1})$ has order $2$. Our previous computations already show that $\eta(T_{1})$ is not contained in the set 
\[\{\eta(P_{1}),\eta(T_{2}),\eta(P_{1})\eta(P_{2}),\eta(P_{1})\eta(T_{2}),\eta(P_{2})\eta(T_{2}),\eta(P_{1})\eta(P_{2})\eta(T_{2}).\}\]
We need only to check that $\eta(T_{1})\neq\eta(P_{2})$ but this is equivalent to proving that $\sqrt{-1} h_{1}^{2}+h_{2}^{2}$ is not a square in $K^{\times}$. As a polynomial of $t$ it has the form
\[\sqrt{-1} a^2 b^2+c^2 d^2+t \left(-2 c^2 d-2 \sqrt{-1} a^2 b\right)+t^2 \left(c^2+\sqrt{-1} a^2\right)\]
It cannot be a constant polynomial and if it has degree $2$ then its discriminant equals $-4 \sqrt{-1} a^2 c^2 (b-d)^2$ and is nonzero. This implies the polynomial is separable and finishes the proof.
\end{proof}
\begin{remark}
Condition $\deg f=2$ from Theorem \ref{theorem:Main_theorem_geometric_MW} is necessary to get the desired upper bound for the rank. In general if $f$, $g$ and $f^2-g^2$ are separable and $\deg(f^2-g^2)=2\deg f$ we get $r\geq 2$ and
\[2+3\deg f+3\deg g +2\deg f+r+\max\{4(\deg f-\deg g)-1,0\}\leq 10\deg f.\]
If $\deg f=\deg g$, then
\[4\leq 2+r\leq 2\deg f.\]
So the upper bound equals lower bound only when $\deg f=\deg g=2$. In the case $\deg f>\deg g$ we get
\[3\leq 1+r\leq \deg g+\deg f.\]
In this situation again we need $\deg f=2$ to match the lower and upper bound.
\end{remark}
\subsection{Mordell-Weil group over $\mathbb{Q}(t)$}
We compute now the structure of the Mordell-Weil group of the curve (\ref{equation:Weierstrass_equation_f_g_h}) in the situation where all polynomials are defined over $\mathbb{Q}[t]$. This is a generalization of \cite[Lemma 6.5]{Naskrecki_Acta}.
\begin{corollary}\label{corollary:Generic_rank_1_Q_t}
Assumptions and notation as in Theorem \ref{theorem:Main_theorem_geometric_MW}. In addition, let $f,g,h$ lie in $\mathbb{Q}[t]$. Then $E$ is defined over $\mathbb{Q}(t)$ and 
\[E(\mathbb{Q}(t))\cong\mathbb{Z}\oplus\mathbb{Z}/2\mathbb{Z}\oplus\mathbb{Z}/2\mathbb{Z}.\]
Group $E(\mathbb{Q}(t))$ is generated by the points $P_{2},T_{1},2T_{2}$.
\end{corollary}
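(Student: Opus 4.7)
The plan is to descend from Theorem \ref{theorem:Main_theorem_geometric_MW} via Galois theory applied to the extension $L/\mathbb{Q}$ with $L = \mathbb{Q}(\sqrt{2},\sqrt{-1})$. Since all generators $P_1,P_2,T_1,T_2$ furnished by that theorem have coordinates in $L[t]$, we get $E(L(t)) = E(\overline{\mathbb{Q}}(t))$, hence $E(\mathbb{Q}(t)) = E(L(t))^{G}$ with $G := \textrm{Gal}(L/\mathbb{Q}) \cong (\mathbb{Z}/2\mathbb{Z})^2 = \langle \sigma,\tau\rangle$, where $\sigma$ sends $\sqrt{2}$ to $-\sqrt{2}$ while fixing $\sqrt{-1}$, and $\tau$ sends $\sqrt{-1}$ to $-\sqrt{-1}$ while fixing $\sqrt{2}$. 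The candidate generators $P_2, T_1, 2T_2$ are manifestly $G$-fixed: $P_2$ is defined over $\mathbb{Q}[t]$, $T_1 = (g^2,0)$ is $\mathbb{Q}(t)$-rational, and the duplication formula from the proof of Lemma \ref{lemma:torsion_subgroup_general} gives $2T_2 = (0,0)$.

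The heart of the argument is to compute the $G$-action on the free part $\Lambda := E(L(t))/E(L(t))_{\textrm{tors}} = \mathbb{Z}P_1 \oplus \mathbb{Z}P_2$. Since $P_2 \in E(\mathbb{Q}(t))$, both $\sigma$ and $\tau$ fix $P_2$ modulo torsion. The height pairing is $G$-equivariant and, by Corollary \ref{corollary:rank_2_for_f_g_curves}, has diagonal Gram matrix $\textrm{diag}(1/2,1)$ on $(P_1,P_2)$; combined with $\sigma^2 = \tau^2 = \textrm{id}$, a short $2\times 2$ matrix calculation forces the $G$-action on $\Lambda$ to be diagonal and sends $P_1$ to $\pm P_1$ modulo torsion. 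The main obstacle is to rule out the trivial action of $\sigma$ on $P_1$ modulo torsion, since $\sigma(P_1)$ and $P_1$ have different coordinates but might still differ only by a torsion translation. I plan to address this via a direct chord-tangent computation: writing $\sigma(P_1) + (-P_1)$ using the group law on $E$, the slope is $\lambda = \sqrt{-1}(2g-h)/\sqrt{2}$, and the resulting $x$-coordinate simplifies, via $f^2+g^2 = h^2$, to $\tfrac{1}{2}h^2$. Since the $x$-coordinates of all torsion points in $E(\overline{\mathbb{Q}}(t))$ lie in $\{0, f^2, g^2, \pm fg\}$ (the three $2$-torsion values together with $T_1 + T_2 = (-fg,\star)$ computed via the addition law), and each of the equations $\tfrac{1}{2}h^2 \in \{0,f^2,g^2,\pm fg\}$ combined with $f^2+g^2 = h^2$ forces $f = \pm g$, contradicting the nonsingularity of $E$ and coprimality of $f,g$, we conclude that $\sigma(P_1) - P_1$ is of infinite order. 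Therefore $\sigma(P_1) \equiv -P_1 \pmod{\textrm{tors}}$, so the $\sigma$-fixed sublattice $\Lambda^{\sigma}$ equals $\mathbb{Z}P_2$.

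Consequently, $E(\mathbb{Q}(t))/E(\mathbb{Q}(t))_{\textrm{tors}}$ embeds into $\Lambda^G \subseteq \Lambda^{\sigma} = \mathbb{Z}P_2$, and since it contains $P_2$, it equals $\mathbb{Z}P_2$. For the torsion, a direct inspection of the eight elements of $\langle T_1\rangle \oplus \langle T_2\rangle$ shows that the four $2$-torsion points $O, T_1, 2T_2, T_1+2T_2 = (f^2,0)$ all lie in $E(\mathbb{Q}(t))$ and form $(\mathbb{Z}/2\mathbb{Z})^2$ generated by $T_1$ and $2T_2$, whereas the remaining four elements $\pm T_2$ and $\pm(T_1+T_2)$ each have $y$-coordinate containing $\sqrt{-1}$ as an explicit nonzero factor (using $T_1+T_2 = (-fg, \sqrt{-1}fg(f+g))$ from the addition formula), and hence are not fixed by $\tau$. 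Combining the torsion and free parts yields $E(\mathbb{Q}(t)) \cong \mathbb{Z} \oplus \mathbb{Z}/2\mathbb{Z} \oplus \mathbb{Z}/2\mathbb{Z}$ with generators $P_2, T_1, 2T_2$, as claimed.
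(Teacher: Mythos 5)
Your proof is correct, and the overall strategy --- Galois descent applied to the explicit generators supplied by Theorem \ref{theorem:Main_theorem_geometric_MW} --- is the same as the paper's, but the execution of the key step is genuinely different. The paper works with the single automorphism $\tau$ (fixing $\sqrt{2}$, negating $\sqrt{-1}$) and reads off from the coordinates the \emph{exact} identities $\tau(P_1)=-P_1$, $\tau(P_2)=P_2$, $\tau(T_1)=T_1$, $\tau(T_2)=-T_2$: indeed $\tau$ fixes $x(P_1)=-(1+\sqrt{2})g(g-h)$ and negates $y(P_1)$, which is $\sqrt{-1}$ times an element of $\mathbb{Q}(\sqrt{2})[t]$. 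Writing $x=a_1P_1+a_2P_2+b_1T_1+b_2T_2$ and imposing $\tau(x)=x$ then yields $a_1=0$ and $b_2\in 2\mathbb{Z}$ in one line, with no height computations and no torsion ambiguity. You instead analyse the action of the other generator $\sigma$ of $\mathrm{Gal}(\mathbb{Q}(\sqrt{2},\sqrt{-1})/\mathbb{Q})$, for which the relation $\sigma(P_1)\equiv -P_1$ really does hold only modulo torsion, so you are forced into the lattice rigidity argument ($a^2+2b^2=1$ from the Gram matrix $\mathrm{diag}(1/2,1)$) and the chord computation. I checked that computation: the slope is indeed $\sqrt{-1}(2g-h)/\sqrt{2}$, the $x$-coordinate of $\sigma(P_1)-P_1$ simplifies via $f^2+g^2=h^2$ to $h^2/2$, the torsion $x$-coordinates are exactly $\{0,f^2,g^2,\pm fg\}$ (with $x(T_1+T_2)=-fg$), and each coincidence $h^2/2\in\{0,f^2,g^2,\pm fg\}$ contradicts coprimality of $f$ and $g$. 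Your treatment of the torsion via $\tau$ matches the paper's. So your route is valid and self-contained, just longer: noticing that $\tau$ already acts by exact negation on $P_1$ and $T_2$ would have let you dispense with the height pairing and the group-law computation entirely.
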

\begin{proof}
Let $H=E(\mathbb{Q}(t))$ and $G=E(\overline{\mathbb{Q}}(t))$. There exists a natural action of the absolute Galois group $\mathrm{Gal}(\overline{\mathbb{Q}}/\mathbb{Q})$ on $G$. For $\sigma\in \mathrm{Gal}(\overline{\mathbb{Q}}/\mathbb{Q})$ and $f(t)\in \overline{\mathbb{Q}}[t]$ such that $f(t)=\sum_{i=0}^{n}a_{i}t^{i}$ we define
\[f^{\sigma}(t)=\sum_{i=0}^{n}\sigma(a_{i})t^{i}.\]
When $P=0\in E(\overline{\mathbb{Q}}(t))$, we put $\sigma(P)=P$. For $0\neq P=\left(\frac{x_{1}(t)}{x_{2}(t)},\frac{y_{1}(t)}{y_{2}(t)}\right)$ a point in $G$ we define
\[\sigma(P)=\left(\frac{x_{1}^{\sigma}(t)}{x_{2}^{\sigma}(t)},\frac{y_{1}^{\sigma}(t)}{y_{2}^{\sigma}(t)}\right).\]
This action of $\mathrm{Gal}(\overline{\mathbb{Q}}/\mathbb{Q})$ on $G$ preserves the group structure of $G$ since $E$ was defined over $\mathbb{Q}(t)$. This determines a group representation $\rho:\mathrm{Gal}(\overline{\mathbb{Q}}/\mathbb{Q})\rightarrow \mathrm{Aut}(G)$. Group $H$ is equal to the fixed points of $\rho$, namely $H=G^{\rho(\mathrm{Gal}(\overline{\mathbb{Q}}/\mathbb{Q}))}$.

Let us choose a particular automorphism $\tau\in \mathrm{Gal}(\overline{\mathbb{Q}}/\mathbb{Q})$ such that $\tau(\sqrt{-1})=-\sqrt{-1}$ and $\tau(\sqrt{2})=\sqrt{2}$. If $x=a_{1}P_{1}+a_{2}P_{2}+b_{1}T_{1}+b_{2}T_{2}$, $a_{1},a_{2},b_{1},b_{2}\in\mathbb{Z}$ lies in $H$, then $\tau(x)=x$. On the generators of $G$ we get
\[\tau(P_{1})=-P_{1},\tau(P_{2})=P_{2},\tau(T_{1})=T_{1},\tau(T_{2})=-T_{2}.\]
In particular, $\tau(x)=x$ implies $2a_{1}P_{1}+2b_{2}T_{2}=0$, so $a_{1}=0$ and $2b_{2}T_{2}$. Point $T_{2}$ is of order $4$, hence $b_{2}\in 2\mathbb{Z}$. Therefore, group $H$ is generated by $P_{1}$, $T_{1}$ and $2T_{2}$.
\end{proof}

\begin{example}
The rank one result from Corollary \ref{corollary:Generic_rank_1_Q_t} can be improved if we allow $f,g,h\in\overline{\mathbb{Q}}[t]$ but such that the curve (\ref{equation:Weierstrass_equation_f_g_h}) is still defined over $\mathbb{Q}(t)$. Let
\begin{align*}
h_{1}&=(-2) ^{1/4}(4\sqrt{2}-t),\\
h_{2}&=(-2) ^{1/4}(4\sqrt{2}+t).
\end{align*}
By Proposition \ref{proposition:h_1_h_2_generators} we obtain $f=\sqrt{-2} (t^2+2^5)$, $g=-2^4 t$ and $h=\sqrt{-2}(2^5-t^2)$, and the curve
\[E:y^2=x(x + 2(t^2 + 2^5)^2) (x - (-2^4 t)^2 ) \]
is defined over $\mathbb{Q}(t)$.
From Lemma \ref{lemma:rank_2_subgroup_E} it follows that the points
\begin{align*}
Q_{1}=&(-g^2,\sqrt{-2}g^2 h)=(-2^{8} t^2, 2^{9} t^2 (t^2 -2^5)),\\
Q_{2}=&(h^2,fgh)=(-2(2^5 - t^2)^2, -2^{5} t (t^4-2^{10}))
\end{align*}
are linearly independent. Following the same method as the one presented in Corollary \ref{corollary:Generic_rank_1_Q_t}, we prove that $Q_{1}$,$Q_{2}$ and $T_{1}$ and $2T_{2}$ generate the group $E(\mathbb{Q}(t))$. 
\end{example}

\begin{example}
Let $\alpha=1$,$\beta=1$ and $\gamma=2$ and consider the rational parametrization of family (\ref{eq:main_family}) with the polynomials $a=1+2t-t^2$, $b=-1+2t+t^2$ and $c=1+t^2$. The corresponding family (\ref{equation:Weierstrass_equation_f_g_h}) is determined by $f=\sqrt{-1}(1+2t-t^2)$, $g=\sqrt{-1}(-1+2t+t^2)$ and $h=\sqrt{-2}(1+t^2)$. So the curve 
\[E : y^2=x(x+(1+2t-t^2)^2)(x+(-1+2t+t^2)^2)\]
is defined over $\mathbb{Q}(t)$. Group $E(\mathbb{Q}(t))_{\textrm{tors}}$ is generated by 
\begin{align*}
T_{1}=&(g^2,0)=(-(-1+2t+t^2)^2,0),\\
T_{2}=&(fg, \sqrt{-1}f(f - g)g)=(1 - 6 t^2 + t^4,2 (-1 + 7 t^2 - 7 t^4 + t^6)).
\end{align*}
Moreover, the group $E(\mathbb{Q}(t))/E(\mathbb{Q}(t))_{\textrm{tors}}$ is isomorphic to $\mathbb{Z}$, generated by the coset determined by the point
\[Q_{2}=(-g^2,\sqrt{-2}g^2 h)=((-1+2t+t^2)^2,2(1+t^2)(-1+2t+t^2)^2).\]
This family of curves was used in the article \cite{Ulas_Bremner} of Bremner and Ulas.
\end{example}

\begin{example}\label{example:parametrization_rank_3}
For the last example we need the notion of twisting the Weierstrass equation by an automorphism of a field. Let $K$ be a field of characteristic $0$ and assume that
\[E:y^2=x^3+Ax^2+Bx\]
is a Weierstrass model of an elliptic curve such that $A,B\in K$. Let $\sigma:K\rightarrow K$ be an automorphism of field $K$. The curve
\[E^{\sigma}:y^2=x^3+\sigma(A)x^2+\sigma(B)x\]
is a Weierstrass model of another elliptic curve over $K$. The map
\begin{align}
E(K)&\rightarrow E^{\sigma}(K)\nonumber\\
(x,y)&\mapsto (\sigma(x),\sigma(y))\label{al:izomorfizm_nad_K}\\
O&\mapsto O\nonumber
\end{align}
establishes an isomorphism of the Mordell-Weil groups $E(K)$ and $E^{\sigma}(K)$. For this particular example, let $K=\overline{\mathbb{Q}}(t)$ and let $E_3$ (it corresponds to the curve in \cite{Naskrecki_Acta} with the same label) be the curve determined by the equation
\[E_{3}:y^2=x(x-(u_{3}^2-1)^{2})(x-4u_{3}^2),\quad u_{3}=\frac{2t}{5+t^2}.\]
Let $E_{4}$ be another curve, determined by
\[E_{4}:y^2=x(x + 2(u_{4}^2 + 2^5)^2) (x - (-2^4 u_{4})^2 ),\quad u_{4}=\frac{-2^4 t}{-10+t^2}. \]
Let $\sigma$ be the automorphism uniquely determined by the property $\sigma(t)=\frac{1}{\sqrt{-2}}t$. It follows that $E_{3}(K)\cong E_{3}^{\sigma}(K)$. Moreover we have a $K$-isomorphism of elliptic curves
\begin{align*}
\phi:E_{3}^{\sigma}&\rightarrow E_{4},\\
(x,y)&\mapsto (s^2 x,s^3 y),\quad s=-2^5\sqrt{-2}.
\end{align*}
The existence of $\phi$ implies that $E_{3}^{\sigma}(K)\cong E_{4}(K)$. 
We define two triples of polynomials
\[f_{3}=u_{3}^2-1,\quad g_{3}=2u_{3},\quad h_{3}=u_{3}^2+1\]
and
\[f_{4}=\sqrt{-2} (u_{4}^2+2^5),\quad g_{4}=-2^4 u_{4},\quad h_{4}=\sqrt{-2}(2^5-u_{4}^2).\]
For $i\in\{3,4\}$ we define points
\begin{align*}
P_{1,i}=&(-(1+\sqrt{2})g_{i}(g_{i}-h_{i}),\sqrt{-1}(1+\sqrt{2})g_{i}(g_{i}-h_{i})(\sqrt{2}g_{i}-h_{i})),\\
P_{2,i}=&((f_{i}-h_{i})(g_{i}-h_{i}),(f_{i}+g_{i})(f_{i}-h_{i})(g_{i}-h_{i})),\\
T_{1,i}=&(g_{i}^2,0),\\
T_{2,i}=&(f_{i}g_{i}, \sqrt{-1}f_{i}(f_{i} - g_{i})g_{i}).
\end{align*}
Moreover, let us define
\begin{align*}
P_{3,3} &= \left(-f_{3}, \frac{(-5 + t^2) u_{3} (-1 + u_{3}^2)}{5 + t^2}\right)\in E_{3}(K),\\
P_{3,4} &= \left(-\frac{2^6}{\sqrt{-2}} f_{4},\frac{2^9 \left(t^2+10\right) u_{4} \left(2^{5}+u_{4}^2\right)}{10-t^2}\right)\in E_{4}(K).
\end{align*}
The following equalities hold
\begin{align*}
\phi(P_{1,3}) &= -P_{1,4}+T_{1,4}+2T_{2,4},\\
\phi(P_{2,3}) &= -P_{2,4}+2T_{2,4},\\
\phi(T_{1,3}) &= T_{1,4},\\
\phi(T_{2,3}) &= T_{2,4},\\
\phi(P_{3,3}) &= P_{3,4}.
\end{align*}
From \cite[Theorem 1.4]{Naskrecki_Acta} it follows that $E_{3}(K)\cong\mathbb{Z}^{3}\oplus\mathbb{Z}/2\mathbb{Z}\oplus\mathbb{Z}/4\mathbb{Z}$. Hence, the same is true for $E_{4}(K)$. But from \cite[Theorem 1.4]{Naskrecki_Acta} we know that $E_{3}(\mathbb{Q}(t))$ is generated by $P_{2,3},P_{3,3}$, $T_{1,3},2T_{2,3}$, so $E_{3}(\mathbb{Q}(t))\cong \mathbb{Z}^{2}\oplus\mathbb{Z}/2\mathbb{Z}\oplus\mathbb{Z}/2\mathbb{Z}$.

An argument similar to the one in Corollary \ref{corollary:Generic_rank_1_Q_t} shows that the group $E_{4}(\mathbb{Q}(t))$ is generated by $2P_{1,4},2P_{2,4},P_{3,4}$, $T_{1,4},2T_{2,4}$, so
\[E_{4}(\mathbb{Q}(t))\cong\mathbb{Z}^{3}\oplus\mathbb{Z}/2\mathbb{Z}\oplus\mathbb{Z}/2\mathbb{Z}.\]
This explains Remark \ref{remark:rank_3_curve_E_4}.
\end{example}

\subsection{Specialization theorem}
In this final section we will explain how we obtain the lower bounds for the Mordell-Weil ranks described in Theorem \ref{theorem:main_theorem_ranks_over_number_fields} and Corollary \ref{corollary:Best_rank_result}. Our main tool is \cite[Theorem 20.3]{Silverman_arithmetic}.
\begin{proof}[Proof of Theorem \ref{theorem:main_theorem_ranks_over_number_fields}]
By the assumptions of the theorem there exists a triple of polynomials $f,g,h\in F[t]$ such that $\alpha f^2+\beta g^2=\gamma c^2$ and we can assume without loss of generality that $\deg g\leq \deg f$ and $\deg f = 2$. Lemma \ref{lemma:rank_2_subgroup_E} implies that the curve
\[E_{t}:y^2=x(x-\alpha f^2)(x-\beta g^2)\]
treated as an elliptic curve over $\overline{\mathbb{Q}}(t)$ satisfies $\textrm{rank } E_{t}(\overline{\mathbb{Q}}(t)) =2$. Moreover, by the specialization theorem of Silverman \cite[Theorem 20.3]{Silverman_arithmetic} there exists and infinite set of $t_{0}\in F$ such that
\[\textrm{rank }E_{t}(F(t))\leq \textrm{rank }E_{t_{0}}(F).\]
Parameter $t_{0}$ determines a triple $(a,b,c)\in F^{3}$ as follows
\[\frac{a}{c}=\frac{f(t_{0})}{h(t_{0})},\quad \frac{b}{c}=\frac{g(t_{0})}{h(t_{0})}.\]
The specialization homomorphism that defines a map from $E_{t}(F(t))$ to $E_{t_{0}}(F)$ is injective for the parameter $t_{0}$ we chose. The points $Q_{1}$, $Q_{2}$ from the formulation of Lemma \ref{lemma:rank_2_subgroup_E} are linearly independent, hence the specializations of those points are also linearly independent in $E_{t_{0}}(F)$. They have the following form
\begin{align*}
\widetilde{Q}_{1}=&(-\beta b^2,\sqrt{-2}\beta\sqrt{\gamma}b^2c),\\
\widetilde{Q}_{2}=&(\gamma c^2,\sqrt{\alpha}\sqrt{\beta}\sqrt{\gamma}abc).
\end{align*}
Now we apply conditions $(i)$, $(ii)$ and $(iii)$ to show that the lower bound of $\textrm{rank } E_{t_{0}}(F)$ is $1$, $1$ and $2$, respectively.
\end{proof}

\begin{proof}[Proof of Corollary \ref{corollary:Best_rank_result}]
The proof easily follows from the Example \ref{example:parametrization_rank_3}, where we have proved that curve $E_{4}$ has rank $3$ over $\mathbb{Q}(t)$. Silverman's specialization theorem now implies that for any specialization $E_{4,t_{0}}$ of curve $E_{4}$ with parameter $t_{0}$ outside a finite set of rational numbers the rank of $E_{4,t_{0}}(\mathbb{Q})$ is at least $3$. The set in which we can achieve rank at least $3$ is defined in \eqref{equation:specialized_rank_3_set}. In Remark \ref{remark:explicit_rank_3_subgroup} we give explicitly three linearly independent points in $E_{4,t_{0}}(\mathbb{Q})$.
\end{proof}

By specialization it is possible to find curves (\ref{eq:main_family}) over $\mathbb{Q}$ that have at the same time positive rank over $\mathbb{Q}$ and the torsion subgroup over $\mathbb{Q}$ larger than $\mathbb{Z}/2\mathbb{Z}\oplus\mathbb{Z}/4\mathbb{Z}$, which is the maximal torsion subgroup over $\overline{\mathbb{Q}}(t)$ for this family, as described in Corollary \ref{corollary:torsion_structure_f_g_curves_with_assumptions}.
\begin{example}\label{example:large_torsion}
Equation $-a^2-b^2=-52721c^2$ can be parametrized by
\[a=225+128t-225t^2,\, b=-64+450t+64t^2,\, c=1+t^2\]
and the elliptic curve
\[E_{t}:y^2=x(x+a^2)(x+b^2)\]
has the property that $E_{t}(\mathbb{Q}(t))_{\textrm{tors}}\cong \mathbb{Z}/2\mathbb{Z}\oplus\mathbb{Z}/4\mathbb{Z}$. But also:
\begin{itemize}
\item $E_{1}(\mathbb{Q})\cong\mathbb{Z}\oplus\mathbb{Z}/2\mathbb{Z}\oplus\mathbb{Z}/8\mathbb{Z}$
\item $E_{0}(\mathbb{Q})\cong\mathbb{Z}\oplus\mathbb{Z}/2\mathbb{Z}\oplus\mathbb{Z}/8\mathbb{Z}$
\item $E_{-1}(\mathbb{Q})\cong\mathbb{Z}\oplus\mathbb{Z}/2\mathbb{Z}\oplus\mathbb{Z}/8\mathbb{Z}$
\end{itemize}
\end{example}
We leave as an open problem the following question: is it possible to find polynomials $f,g,h$ such that the corresponding curve (\ref{equation:Weierstrass_equation_f_g_h}) will satisfy over $\mathbb{Q}(t)$ the condition that the torsion subgroup over $\mathbb{Q}(t)$ is $\mathbb{Z}/2\mathbb{Z}\oplus\mathbb{Z}/8\mathbb{Z}$ and the rank over $\mathbb{Q}(t)$ will be positive.

\section*{Acknowledgements}
The author would like to thank Wojciech Gajda for his excellent supervision of author's Ph.D. project. He is also indebted to Jerzy Browkin and Jerzy Kaczorowski for their valuable reviews of author's Ph.D. thesis. The author also thanks Remke Kloosterman and Matthias Sch\"{u}tt for their valuable remarks. The author was supported by the Polish National Science Centre research grant 2012/05/N/ST1/02871. This paper is based on the results obtained in the author's Ph.D. thesis \cite{Naskrecki_thesis}.

\bibliography{bibliography}
\bibliographystyle{amsplain}

\end{document}